\theoremstyle{plain}
\newtheorem*{theorem*}{Theorem}
\newtheorem{theorem}{Theorem}
\newtheorem{lemma}[theorem]{Lemma}
\newtheorem*{claim*}{Claim}
\newtheorem{conj}[theorem]{Conjecture}
\newtheorem{quest}[theorem]{Question}
\theoremstyle{definition}
\theoremstyle{remark}
\newcommand{\G}{\ensuremath{\mathcal{G}}}
\newcommand{\Z}{\ensuremath{\mathbb{Z}}}
\newcommand{\R}{\ensuremath{\mathcal{R}}}
\newcommand{\GG}{\ensuremath{\mathbb{G}}}
\pgfplotsset{compat = newest}
\title{Transparent Rectangle Visibility Graphs}
\author{Chaipattana Juntarapomdach\thanks{\,Department of Mathematics and Computer Science, Faculty of Science, Chulalongkorn University, Bangkok 10330, Thailand; \texttt{6770025623@student.chula.ac.th}.}
	\and Teeradej Kittipassorn\thanks{\,Department of Mathematics and Computer Science, Faculty of Science, Chulalongkorn University, Bangkok 10330, Thailand; \texttt{teeradej.k@chula.ac.th}.}}
\begin{document}
\maketitle

\begin{abstract}
	A \emph{transparent rectangle visibility graph (TRVG)} is a graph whose vertices can be represented by a collection of non-overlapping rectangles in the plane whose sides are parallel to the axes such that two vertices are adjacent if and only if there is a horizontal or vertical line intersecting the interiors of their rectangles.
	
	We show that every threshold graph, tree, cycle, rectangular grid graph, triangular grid graph and hexagonal grid graph is a TRVG.
	We also obtain a maximum number of edges of a bipartite TRVG and characterize complete bipartite TRVGs.
	More precisely, a bipartite TRVG with $n$ vertices has at most $2n-2$ edges.
	The complete bipartite graph $K_{p,q}$ is a TRVG if and only if $\min\{p,q\} \le 2$ or $(p,q) \in \{(3,3), (3,4)\}$.
        We prove similar results for the torus.
	Moreover, we study whether powers of cycles and their complements are TRVGs.
\end{abstract}

%%%%%%%%%%%%Intro%%%%%%%%%%%%%%
\section{Introduction}
Consider a collection of non-overlapping rectangles in the plane where the sides are parallel to the axes.
Two rectangles $A$ and $B$ \emph{see each other} if there is a horizontal or vertical line segment intersecting the interiors of $A$ and $B$ such that it passes through no other rectangles.
This naturally induces a graph whose vertices are the rectangles and two vertices are adjacent if their rectangles see each other.
We call a graph which can be constructed in this way, a \emph{rectangle visibility graph}, or \emph{RVG}.
The first problem that arises is: which graphs are RVGs?

The study of RVGs was initiated in 1976 by Garey, Johnson and So~\cite{GJS} as a tool for designing printed circuit boards.
They considered only $1 \times 1$ squares in the plane whose vertices are lattice points.
In 1995, Dean and Hutchinson~\cite{DH} discovered that a complete bipartite graph $K_{p,q}$ where $p\le q$ is an RVG if and only if $p\leq 4$, and moreover, every bipartite RVG with $n$ vertices has at most $4n-12$ edges.
A few years later, in 1999, Hutchinson, Shermer, and Vince~\cite{HSV} proved that every RVG with $n$ vertices has at most $6n-20$ edges, and this bound is best possible for $n \geq 8$.
In 1996, Bose, Dean, Hutchinson and Shermer~\cite{BDHS} found new classes of RVGs including  graphs with maximum degree four and graphs that can be decomposed into two caterpillar forests.
Recently, in 2022, Caughman, Dunn, Laison, Neudauer, Starr~\cite{CDLN} studied the area, perimeter, height and width of the bounding box of an RVG.
Many variations of RVG have been investigated including bar visibility graphs~\cite{DHVM}, $3$-dimensional box visibility graphs~\cite{BEFH, DHM, FM, GL} and polygon visibility graphs~\cite{GDEL, LMT}.

In this paper, we consider what happens if rectangles $A$ and $B$ still see each other even when there are other rectangles blocking the line of sight.
That is, we suppose that every rectangle is transparent.
Then we call such a \emph{transparent rectangle visibility graph}, or \emph{TRVG}. Figure~\ref{fig:trvgandrvg} illustrates an example of a collection of rectangles representing different graphs as a TRVG and an RVG.
\begin{figure}[h]
	\centering
	\begin{tikzpicture}[scale=0.7]
		\filldraw[red!60!white, draw=black, very thick] (0,0) rectangle (1,1);
		\filldraw[red!60!white, draw=black, very thick] (1,0) rectangle (2,1);
		\filldraw[red!60!white, draw=black, very thick] (2,0) rectangle (3,1);
		
		\filldraw (0,-0.5) circle(0.1cm);
		\filldraw (0.5,-1.36) circle(0.1cm);
		\filldraw (-0.5,-1.36) circle(0.1cm);
		\draw[very thick] (0,-0.5) -- (-0.5,-1.36);
		\draw[very thick] (0.5,-1.36) -- (-0.5,-1.36);
		\draw[very thick] (0.5,-1.36) -- (0,-0.5);
		
		\filldraw (3,-0.5) circle(0.1cm);
		\filldraw (3.5,-1.36) circle(0.1cm);
		\filldraw (2.5,-1.36) circle(0.1cm);
		\draw[very thick] (3,-0.5) -- (2.5,-1.36);
		\draw[very thick] (3.5,-1.36) -- (3,-0.5);
		\draw (0,-1.8) node[rectangle] {TRVG};
		\draw (3,-1.8) node[rectangle] {RVG};
	\end{tikzpicture}
	\caption{A difference between a TRVG and an RVG}
	\label{fig:trvgandrvg}
\end{figure}
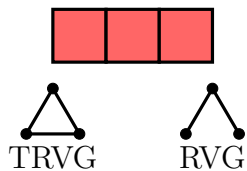

Note that every induced subgraph of a TRVG is also a TRVG by deleting all rectangles corresponding to the vertices outside the induced subgraph.

Our first main result proves that some classes of graphs are TRVGs.

\begin{theorem}\label{thm:istrvg}
	The following graphs are TRVGs:
	\begin{enumerate}[(i)]
		\item a threshold graph,
            \item a tree,
		\item a cycle,
		\item an infinite rectangular grid graph,
		\item an infinite triangular grid graph and
		\item an infinite hexagonal grid graph.
	\end{enumerate}
\end{theorem}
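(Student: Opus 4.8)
The plan is to first isolate the combinatorial essence of transparency. Because rectangles are transparent, two rectangles see each other horizontally exactly when their (open) $y$-projections overlap, and vertically exactly when their $x$-projections overlap; meanwhile two rectangles are non-overlapping precisely when at least one pair of projections is disjoint. After perturbing endpoints into general position, this yields the clean reformulation I would use throughout: a graph $G$ is a TRVG if and only if its edges can be partitioned as $E(G)=E_1\sqcup E_2$ so that both $(V,E_1)$ and $(V,E_2)$ are interval graphs, realized on the $x$- and $y$-axes respectively. The forward direction reads off the two interval systems from a representation; the backward direction builds the rectangle of $v$ as $X(v)\times Y(v)$ from independent interval representations of $E_1$ and $E_2$, and the edge-disjointness of the partition is exactly what keeps the rectangles non-overlapping (no pair overlaps in both coordinates). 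Every item in the theorem then becomes a question of splitting the edges into two interval graphs.

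With this tool, parts (i)--(iii) are quick. A threshold graph is an interval graph, so I would take $E_1=E(G)$ and $E_2=\varnothing$, i.e.\ use a single interval representation for the $x$-coordinates and pairwise disjoint $y$-intervals (a vertical stack). For a tree I would $2$-colour the edges by the parity of the level of their upper endpoint in a breadth-first layering; each vertex then contributes its downward edges to one colour and its single upward edge to the other, so each colour class is a \emph{star forest}, which is an interval graph. For a cycle $C_n$ I would put the Hamiltonian path $v_1v_2\cdots v_n$ in $E_1$ and the single remaining edge $v_1v_n$ in $E_2$: a path and a single edge are both interval graphs, and they are edge-disjoint, which realises $C_n$.

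The infinite grids are where the real work lies. For the square grid I would use a \emph{comb} decomposition: colour every horizontal edge by the parity of its row, and every vertical edge $\{(i,j),(i,j+1)\}$ by the parity of $j$, so that each colour class is a disjoint union of combs (a horizontal spine carrying one tooth per spine vertex), a caterpillar forest and hence an interval graph. The hexagonal grid is triangle-free of girth six, so the same philosophy applies and I would exhibit an explicit brick-wall $2$-colouring whose classes are again caterpillar forests. The triangular grid is not bipartite and its colour classes may (and indeed must) contain triangles, so here I would instead split the three edge-directions of the lattice into two genuine interval graphs, for instance the horizontal rows in one class and a suitably interval-represented union of the two diagonal families in the other.

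The main obstacle is realising these decompositions \emph{geometrically} in the infinite setting. Unlike the finite cases, the natural colour classes for the grids are connected and unbounded, and a disjoint union of infinitely many unbounded interval graphs need not be an interval graph at all: already two disjoint bi-infinite paths cannot be represented, since the intervals of each connected component must cover an unbounded sub-interval of the line and so leave no room for the others. The crux, therefore, is to compress each unbounded comb into a \emph{bounded} block of the $x$-axis via an accumulating, self-similar interval representation whose interval lengths tend to $0$, and then to tile the line with countably many such bounded blocks. I expect the delicate part to be verifying that these layouts produce \emph{exactly} the grid adjacencies: that consecutive spine vertices and their teeth overlap as required, while all pairs in different blocks, and all non-consecutive pairs within a block, stay strictly disjoint. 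Checking this simultaneously in both coordinates, and accommodating the extra triangles in the triangular lattice, is the heart of the argument; the finite cases (i)--(iii) need none of it, since stars, paths and single edges are already bounded and pack trivially.
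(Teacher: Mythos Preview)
Your reformulation---that $G$ is a TRVG if and only if $E(G)$ splits as an edge-disjoint union of two interval graphs on $V(G)$---is correct and is not made explicit in the paper; it gives a cleaner, more uniform route than the paper's case-by-case geometric constructions, and it handles (i)--(iv) and (vi) essentially as you outline.

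There is, however, a genuine gap in your treatment of the triangular grid. You propose putting the horizontal rows into $E_1$ and the union of the two diagonal families into $E_2$. But the graph on $\mathbb{Z}^2$ whose edges are the shifts $(0,\pm1)$ and $\pm(1,-1)$ is, after the change of basis $a=(0,1)$, $b=(1,-1)$, isomorphic to the square grid; concretely, the four vertices $(0,0),(0,1),(1,0),(1,-1)$ form an induced $4$-cycle in it. Since interval graphs are chordal, $E_2$ is \emph{not} an interval graph, and no ``suitable interval representation'' can exist. Your framework does still work, but with a different split: put into $E_1$ all horizontal edges in even rows together with all edges between rows $2k$ and $2k+1$, and into $E_2$ the rest. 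Each colour class is then a vertex-disjoint union of ``triangle strips'' (a bi-infinite path with one apex vertex over each spine edge, adjacent to both endpoints), and a triangle strip is an interval graph: place spine vertex $i$ on $[i,\,i+\tfrac32]$ and the apex over the $i$-th edge in a tiny subinterval of $[i+1,\,i+\tfrac32]$. This is in effect the decomposition underlying the paper's row-by-row bounding-box construction.

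A smaller point: your stated ``main obstacle'' is not one. Two disjoint bi-infinite paths \emph{are} an interval graph---push each path into a bounded window via any order-preserving homeomorphism of $\mathbb{R}$ onto an open interval, and place the windows disjointly. You immediately go on to describe exactly this compression, so the argument survives, but the sentence asserting impossibility should be dropped.

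For comparison, the paper never isolates the two-interval-graph criterion. It proves (i) by induction along the threshold construction, (ii) via a gluing lemma for two TRVGs sharing a single vertex, (iii) by an explicit staircase picture, and (iv)--(v) by bespoke bounding-box layouts, deducing (vi) as an induced subgraph of (v). Your approach is more conceptual and reusable; theirs is more pictorial and sidesteps the chordality issue that trips up your proposed split for~(v).
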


%Next, we consider bipartite graphs and discover that some bipartite graphs are non-TRVGs.
%Dean and Hutchinson~\cite{DH} proved that every bipartite RVG with $n \ge 4$ vertices has at most $4n-12$ edges.
The following two main results for bipartite TRVGs are analogous to those of Dean and Hutchinson~\cite{DH} for bipartite RVGs.
Consequently, we discover some bipartite non-TRVGs.

\begin{theorem}\label{thm:bipartTRVG}
    Every bipartite TRVG on $n$ vertices has at most $2n-2$ edges. Moreover, this is best possible for $n\ge7$.
\end{theorem}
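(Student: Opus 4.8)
The plan is to prove the bound by decomposing an arbitrary TRVG into two interval graphs, and to prove tightness by an explicit construction. First I would fix a representation of a bipartite TRVG by non-overlapping axis-parallel rectangles and associate to each vertex both its horizontal projection (an \emph{$x$-interval}) and its vertical projection (a \emph{$y$-interval}). Two rectangles are joined by a vertical line of sight exactly when their $x$-intervals overlap, and by a horizontal line of sight exactly when their $y$-intervals overlap. Hence the edge set splits as $E=E_V\cup E_H$, where the \emph{vertical-visibility graph} $V$ is the intersection graph of the $x$-intervals and the \emph{horizontal-visibility graph} $H$ is the intersection graph of the $y$-intervals; in particular both $V$ and $H$ are interval graphs. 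Since the rectangles are non-overlapping, no pair can overlap in both coordinates at once, so $E_V\cap E_H=\varnothing$ and $|E|=|E_V|+|E_H|$.

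The key step is then to bound each piece. As subgraphs of a bipartite graph, $V$ and $H$ contain no odd cycle, so each is itself bipartite. I would invoke the classical fact that every interval graph is chordal, together with the observation that a bipartite chordal graph is a forest: a shortest cycle would have even length at least $4$ and, by chordality, would admit a chord splitting it into two strictly shorter cycles (a triangle being excluded by bipartiteness), contradicting minimality. Thus $V$ and $H$ are forests, each with at most $n-1$ edges, giving $|E|=|E_V|+|E_H|\le 2(n-1)=2n-2$.

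For tightness it suffices, by reversing this analysis, to produce for every $n\ge 7$ two edge-disjoint spanning trees $T_1,T_2$ on $n$ vertices, each a caterpillar (the trees that are interval graphs), whose union is bipartite: realizing $T_1$ by $y$-intervals and $T_2$ by $x$-intervals and taking the product rectangles yields the desired TRVG automatically, since edge-disjointness forces the rectangles to be non-overlapping and no spurious visibilities arise. For the base case $n=7$ I would take $G=K_{3,4}$, whose $12=2\cdot 7-2$ edges a short check splits into two spanning caterpillars (for parts $\{x,y,z\}$ and $\{1,2,3,4\}$, one may take $T_1=\{x1,x2,z1,z3,y3,y4\}$ with spine $x\!-\!1\!-\!z\!-\!3\!-\!y$, and $T_2$ the complementary $\{x3,x4,z2,z4,y1,y2\}$ with spine $x\!-\!4\!-\!z\!-\!2\!-\!y$). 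To pass from $n$ to $n+1$, I would attach one new vertex to a fixed spine vertex $s_1$ of $T_1$ and to a fixed spine vertex $s_2\ne s_1$ of $T_2$, both chosen in the size-$3$ part; this adds exactly two edges, preserves the bipartition, and keeps both trees caterpillars. Iterating from $K_{3,4}$ produces the extremal examples for all $n\ge 7$.

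The principal obstacle is conceptual rather than computational: recognizing the TRVG as an edge-disjoint union of two interval graphs and exploiting the lemma that a bipartite interval graph is a forest, which together drive the whole upper bound. In the construction the delicate point is ensuring that $T_1$ and $T_2$ remain \emph{simultaneously} caterpillars, edge-disjoint, and bipartite under the extension; attaching each new leaf to a designated spine vertex of each tree in the smaller part controls this cleanly. Finally, I would remark that the restriction $n\ge 7$ is genuine, since for $n\le 6$ any bipartite graph already has fewer than $2n-2$ edges, so the bound cannot be met there.
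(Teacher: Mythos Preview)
Your argument is correct and takes a genuinely different route from the paper's. For the upper bound, the paper proves a counting lemma (Lemma~\ref{lemma:gseer}): after ordering the green rectangles by height, any two of them can horizontally see at most one common red rectangle (otherwise those two reds would see each other), whence $q\ge\sum_i\alpha_i-(p-1)$; applying this in each direction and adding yields $e(G)\le 2n-2$. You instead split $E(G)$ into the horizontal- and vertical-visibility interval graphs, note that each is bipartite and chordal, hence a forest, and add the two forest bounds. Your approach is more structural and explains at a glance why the answer is $2(n-1)$; the paper's lemma is more bare-handed but has the advantage that it adapts with a one-line change to the torus (Lemma~\ref{lemma:gseertorus}, yielding $2n$), whereas on the torus your projection graphs become circular-arc rather than interval graphs and the forest argument no longer applies directly. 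For tightness the paper simply exhibits one explicit family of rectangle configurations (Figure~\ref{fig:n-4and4}); your recursive caterpillar-pair construction is a nice alternative and in fact identifies $K_{3,4}$ as the $n=7$ extremal graph. One point worth making explicit in your write-up: when you assert that $V$ and $H$ are ``subgraphs of a bipartite graph'', you are using that every edge of the full interval-intersection graph is already a visibility edge of $G$---this is precisely where transparency is used, and it is what guarantees that same-colour projections are pairwise disjoint.
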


Applying Theorem~\ref{thm:bipartTRVG}, we can classify complete bipartite graphs.

\begin{theorem}\label{thm:compbip}
    For $p\le q$, $K_{p,q}$ is a TRVG if and only if $p\le 2$ or $(p,q)\in\{(3,3),(3,4)\}$.
\end{theorem}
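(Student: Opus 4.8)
The plan is to prove the two directions separately: the \emph{only if} direction follows almost immediately from the edge bound already established, while the \emph{if} direction requires exhibiting explicit rectangle representations, with the two sporadic cases doing the real work.

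For necessity, I would apply Theorem~\ref{thm:bipartTRVG} directly. The graph $K_{p,q}$ is bipartite with $n=p+q$ vertices and $pq$ edges, so if it is a TRVG then $pq\le 2(p+q)-2$, which rearranges to $(p-2)(q-2)\le 2$. Assuming $p\le q$, a short case analysis on $p$ shows this holds exactly when $p\le 2$, or $p=3$ and $q\le 4$ (for $p\ge 4$ one already has $(p-2)(q-2)\ge(p-2)^2\ge 4$). This rules out every $K_{p,q}$ outside the claimed list, so it remains only to realize the listed graphs.

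For sufficiency I would first record the basic dictionary: two non-overlapping axis-parallel rectangles are adjacent in a TRVG if and only if their $x$-projections overlap or their $y$-projections overlap, and they are non-adjacent precisely when they are separated in both coordinates. I would therefore build a representation by prescribing, for each vertex, an $x$-interval and a $y$-interval so that every same-part pair is disjoint in \emph{both} coordinates, while every cross-part pair overlaps in \emph{exactly one} coordinate (overlapping in both is forbidden, since it would make the rectangles physically overlap). The case $p=1$ is free, as $K_{1,q}$ is a star, hence a tree, hence a TRVG by Theorem~\ref{thm:istrvg}(ii). For $K_{2,q}$ I would place the large part as a strictly increasing diagonal staircase of small squares, pairwise separated in both coordinates, and realize the two remaining vertices as one long horizontal bar below the staircase whose $x$-projection spans all the squares, together with one long vertical bar to the right whose $y$-projection spans all the squares; placing these two bars in opposite corners keeps them mutually separated while each meets every square.

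The heart of the argument is the two sporadic cases, which I would attack through the decomposition $G_x\cup G_y$ of the edge set into the pairs overlapping in $x$ and those overlapping in $y$. Each of $G_x,G_y$ is the overlap graph of two families of pairwise-disjoint intervals, which one checks carries at most $|A|+|B|-1$ edges, a monotone ``staircase''. For $K_{3,3}$ there is room to spare ($5+5>9$), and I would give a staircase $G_x$ together with a complementary $G_y$ realized under a \emph{different} ordering of the intervals, then write down coordinates for the six rectangles. For $K_{3,4}$, however, $6+6=12=2n-2$ is exactly the number of edges, so the representation is forced to partition the edges into two \emph{maximal} staircases---this rigidity is the main obstacle. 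I expect to resolve it by reordering the four $b$-intervals along the vertical axis so that each of the three neighbourhoods not handled by $G_x$ becomes a consecutive block, thereby turning $G_y$ into a genuine staircase; I would finish by exhibiting explicit coordinates for all seven rectangles and verifying that every cross pair overlaps in exactly one axis and that no two rectangles overlap physically.
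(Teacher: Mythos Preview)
Your proposal is correct and follows essentially the same route as the paper. For necessity, you factor the edge bound as $(p-2)(q-2)\le 2$, while the paper instead names the two minimal obstructions $K_{3,5}$ and $K_{4,4}$ and invokes induced-subgraph closure; these are equivalent readings of Theorem~\ref{thm:bipartTRVG}. For sufficiency, the paper simply exhibits pictures for $K_{1,q}$, $K_{2,q}$, and $K_{3,4}$ (deducing $K_{3,3}$ as an induced subgraph), whereas you arrive at the same constructions through the $G_x/G_y$ decomposition---your ``two maximal staircases with the $b$'s reordered'' is exactly the paper's $K_{3,4}$ layout, where the three green squares form one diagonal and the four red squares form a second diagonal in a cyclically shifted order.
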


We see that $K_{4,4}$ and $K_{3,5}$ are non-TRVGs with respect to the plane. However, they are TRVG with respect to the torus. The same ideas of the proofs of Theorems~\ref{thm:bipartTRVG} and~\ref{thm:compbip} can be applied to the case of the torus.

\begin{theorem}\label{thm:bipartTRVGtorus}
    Every bipartite TRVG with respect to the torus on $n$ vertices has at most $2n$ edges. Moreover, this is best possible for $n\ge8$.
\end{theorem}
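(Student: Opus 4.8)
The plan is to mirror the proof of Theorem~\ref{thm:bipartTRVG}, replacing intervals by circular arcs. Given a toroidal representation, I would first partition the edges into \emph{horizontal} edges (pairs of rectangles seen by a common horizontal line of the torus) and \emph{vertical} edges (pairs seen by a common vertical line). Because the rectangles are non-overlapping, these two sets are disjoint: if two rectangles were seen both horizontally and vertically, their interiors would meet. Projecting each rectangle to the vertical circle gives an arc $Y_i$, and a horizontal line sees rectangles $i$ and $j$ exactly when $Y_i$ and $Y_j$ have overlapping interiors; thus the horizontal edges form the intersection graph of the arcs $Y_1,\dots,Y_n$, a circular-arc graph. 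Symmetrically the vertical edges form a circular-arc graph on the arcs $X_1,\dots,X_n$. Each is a subgraph of the TRVG, hence bipartite.

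The heart of the argument is the following lemma: a bipartite circular-arc graph on $n$ vertices has at most $n$ edges. To prove it, note that arcs through a common point of the circle are pairwise intersecting, so they form a clique; since the graph is bipartite, \emph{at most two} arcs cover any point. If some point is covered by no arc, cutting the circle there turns every arc into a genuine interval without changing the intersection graph, so the graph is a bipartite interval graph; being chordal and bipartite it is a forest, with at most $n-1$ edges. Otherwise every point is covered, so the covering depth is everywhere $1$ or $2$. Traversing the circle, the depth increases by one at each arc start and decreases by one at each arc end, and since it never leaves $\{1,2\}$ the starts and ends must alternate; hence there are exactly $n$ maximal arcs of depth $2$. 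Each such depth-$2$ arc is covered by exactly two of the $Y_i$, which therefore intersect, and every edge arises from at least one depth-$2$ arc in this way. So the number of edges is at most the number of depth-$2$ arcs, namely $n$. Applying the lemma to the horizontal and vertical subgraphs gives at most $n+n=2n$ edges.

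For sharpness I would exhibit, for each $n\ge 8$, a bipartite toroidal TRVG with exactly $2n$ edges. The idea is to adapt the extremal planar construction of Theorem~\ref{thm:bipartTRVG}: the torus wraparound lets each of the horizontal and vertical forests close up into a $2$-regular (cycle-cover) arc family, gaining one edge in each direction and raising $2n-2$ to $2n$. Concretely, I would place the arcs $Y_i$ as a cyclic ``necklace'' whose intersection graph is a union of even cycles (so the horizontal subgraph attains $n$ edges), then place the arcs $X_i$ as a second necklace realizing another even-cycle cover for the vertical subgraph; one then checks that the two necklaces can be chosen so that no pair of rectangles is consecutive in both, which is exactly the non-overlap condition, and so that the resulting bipartition is consistent. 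For $n=8$ the graph $K_{4,4}$, with its $16=2\cdot 8$ edges, is the model example and already appears in the toroidal analogue of Theorem~\ref{thm:compbip}.

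I expect the construction, not the upper bound, to be the main obstacle. The counting lemma is robust, but realizing two cycle-cover necklaces \emph{simultaneously} is delicate because the horizontal and vertical placements are coupled through the non-overlap requirement, and because bipartiteness forces all cycles to be even. In particular the parity of $n$ matters: when $n$ is even one can take two Hamiltonian cycles (for instance a suitable bipartite circulant such as the $\pm1,\pm3$ pattern, whose two edge classes are automatically disjoint and necklace-realizable), but when $n$ is odd a Hamiltonian cycle would be odd, so at least one direction must instead use shorter even cycles together with a pendant-type arc. Verifying that every $n\ge 8$ admits such a configuration with exactly $2n$ edges is the step requiring the most care.
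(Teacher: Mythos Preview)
Your upper-bound argument is correct and follows the same overall strategy as the paper: decompose the edges into horizontal and vertical ones and bound each set by $n$ via the arc structure on the corresponding circle of the torus. The paper's Lemma~\ref{lemma:gseertorus} is precisely your circular-arc lemma in disguise: the inequality $q\ge\sum_i\alpha_i-p$ is equivalent to saying that the bipartite circular-arc graph of horizontal visibilities has at most $p+q=n$ edges. Your depth-counting proof (at most two arcs through any point, hence $n$ maximal depth-$2$ intervals, each yielding one edge) is a different and perfectly valid route to that lemma; the paper instead orders the green arcs cyclically and argues that each successive green can share at most one red with the previous ones, picking up one extra shared red from the wrap-around. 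So the two proofs diverge only in how the key counting lemma is established.

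For sharpness, your sketch is incomplete and your parity worry is a red herring created by your own restriction to $2$-regular necklaces. Achieving $n$ horizontal edges does \emph{not} force the horizontal arc graph to be $2$-regular: it only forces the $n$ depth-$2$ intervals to yield $n$ distinct edges, which merely requires that no green--red pair of arcs together covers the whole circle. For instance, one long green arc overlapping many short disjoint red arcs contributes high degree on one side and degree $1$ on the other, and this is exactly the shape of the paper's construction. The paper takes four rectangles of one colour and $n-4$ of the other, placed explicitly on the torus (Figure~\ref{fig:n-4and4torus}), and this single family works uniformly for all $n\ge 8$ with no case split on parity. Your circulant-and-pendant programme might be made to work, but the paper's asymmetric $4$-versus-$(n-4)$ layout is both simpler and avoids the consistency and non-overlap bookkeeping you anticipate.
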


\begin{theorem}\label{thm:compbiptorus}
    For $p\le q$, $K_{p,q}$ is a TRVG with respect to the torus if and only if $p\le 2$ or $(p,q)\in\{(3,3),(3,4),(3,5),(3,6),(4,4)\}$.
\end{theorem}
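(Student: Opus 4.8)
The plan is to prove necessity directly from the torus edge bound and then to supply torus representations for the finitely many sporadic cases, transferring the remaining cases from the plane.

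For necessity, suppose $K_{p,q}$ with $p\le q$ is a TRVG with respect to the torus. Since it is bipartite on $n=p+q$ vertices, Theorem~\ref{thm:bipartTRVGtorus} gives $pq\le 2(p+q)$, which rearranges to $(p-2)(q-2)\le 4$. A short case check on $p$ leaves exactly $p\le 2$ (any $q$) together with $(p,q)\in\{(3,3),(3,4),(3,5),(3,6),(4,4)\}$: for $p=3$ the inequality reads $q\le 6$, for $p=4$ it reads $q\le 4$, and for $p\ge 5$ there is no solution with $q\ge p$. This matches the claimed list, so the ``only if'' direction is done once every listed graph is shown to be realizable.

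For sufficiency I would first record that a plane TRVG is always a torus TRVG: placing a plane representation inside a fundamental domain strictly larger than its bounding box leaves an uncovered strip in each coordinate, so no arc wraps around and the interval-overlap pattern is preserved on both circles. By Theorem~\ref{thm:compbip} this immediately yields $K_{1,q}$, $K_{2,q}$, $K_{3,3}$ and $K_{3,4}$. Since the same deletion argument shows induced subgraphs of torus TRVGs are torus TRVGs, and $K_{3,5}$ is an induced subgraph of $K_{3,6}$, it then suffices to build $K_{3,6}$ and $K_{4,4}$ directly on the torus. I would phrase these constructions through the observation that in any representation each cross pair $a_ib_j$ must overlap in exactly one coordinate: overlapping in both would make the rectangles intersect, and in neither would make them non-adjacent. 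Thus a representation amounts to splitting $E(K_{p,q})$ into a horizontal-visibility graph $H$ and the complementary vertical-visibility graph $\overline{H}$, realizing $H$ by the overlaps of $p$ pairwise-disjoint arcs (one part) against $q$ pairwise-disjoint arcs (the other part) on the horizontal circle, and $\overline{H}$ likewise on the vertical circle; because the arcs within a part are disjoint in both coordinates, vertices in a common part are automatically non-adjacent and all rectangles are non-overlapping.

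For $K_{4,4}$ I would take $H$ to be the $8$-cycle $a_1b_1a_2b_2a_3b_3a_4b_4a_1$, realized by placing the four $b$-arcs as widely spaced arcs around the horizontal circle and letting each $a_i$-arc reach across the gap separating two consecutive $b$-arcs, overlapping both; the arc $a_1$ joining $b_4$ back to $b_1$ wraps around the circle, and two $a$-arcs meeting a common $b$-arc approach it from opposite ends through a widened $b$-arc so that they remain interior-disjoint. Its bipartite complement $\overline{H}$ is the other $8$-cycle and is realized identically on the vertical circle. For $K_{3,6}$ I would instead let each of the three $a$-arcs span three consecutive $b$-arcs, staggered by two, so that $\overline{H}$ is the same pattern shifted by three and is realized the same way. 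Note that $K_{4,4}$ ($n=8$) and $K_{3,6}$ ($n=9$) attain $pq=2n$ edges, so these representations also witness the tightness asserted in Theorem~\ref{thm:bipartTRVGtorus}.

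The main obstacle is the realization step, and in particular the wraparound: on a line an arc overlapping both the first and the last of a cyclic block of $b$-arcs is forced to overlap everything in between, which is precisely why $K_{4,4}$ and $K_{3,6}$ fail in the plane, and the torus is exactly what removes this obstruction. The delicate verifications are that two $a$-arcs sharing a $b$-arc can be made interior-disjoint (by widening that $b$-arc and entering it from opposite sides), that no unintended overlaps arise, and that for every cross pair exactly one of the two circles records an overlap, so the realized graph is precisely $K_{p,q}$ rather than a proper subgraph or supergraph. Once these are checked, the constructions certify that the edge bound of Theorem~\ref{thm:bipartTRVGtorus} is the sole obstruction, completing the characterization.
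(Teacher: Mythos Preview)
Your proposal is correct and follows the paper's approach: transfer the planar cases via Theorem~\ref{thm:compbip}, build $K_{3,6}$ and $K_{4,4}$ explicitly on the torus (your arc-overlap description of the $8$-cycle decomposition is exactly what the paper's figures encode), and rule out the rest with the edge bound of Theorem~\ref{thm:bipartTRVGtorus}. The one cosmetic difference is in necessity: the paper checks only that $K_{3,7}$ and $K_{4,5}$ violate the bound and then argues every remaining $K_{p,q}$ contains one of them as an induced subgraph, whereas your direct rearrangement $pq\le 2(p+q)\iff (p-2)(q-2)\le 4$ is cleaner and avoids that step.
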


Furthermore, we consider a special case of regular graphs. Vertices $v_1,\dots,v_n$ form a cycle $C_n$ if $v_i$ and $v_j$ are adjacent when $|i-j|\equiv 1 \pmod{n}$.
Generalizing this, they form the \emph{$a^{th}$ power of $C_n$} if $v_i$ and $v_j$ are adjacent when $|i-j|\equiv k \pmod{n}$ for some $k\le a$. We denote this graph by $C^a_n$ and its complement by $D^a_n$

%Furthermore, we consider a special case of regular graphs, a \emph{power of cycle} denoted by $C^k_n$, if $V(C^k_n)=\{v_0,v_1,\dots,v_{n-1}\}$ and $E(C^k_n)$ is $\bigcup^k_{i=1} E^i$ where $E^i=\{v_iv_{i+j\pmod{n}}:j\in\{0,1,\dots,n-1\}\}$.
%We also denote a complement of $C^k_n$ by $D^k_n$.

\begin{theorem}\label{thm:poc}
    \begin{enumerate}[(i)]
        \item Every power of a cycle is a TRVG.
        \item $D^1_n$ is a TRVG.
        \item $D^a_n$ with $n\ge 2a+8$ is a non-TRVG when $a\ge 3$.
        \item $D^a_n$ with $n\le 2a+4$ is a TRVG when $a\ge 2$.
    \end{enumerate}
\end{theorem}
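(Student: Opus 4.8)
The plan is to reduce all four parts to one structural characterisation. Two non-overlapping axis-parallel rectangles can never have both their $x$-projections and their $y$-projections overlap (else their interiors would meet), whereas they see each other exactly when at least one pair of projections overlaps. Hence, writing $E_x$ (resp. $E_y$) for the pairs whose $x$-projections (resp. $y$-projections) overlap, a TRVG is precisely a graph $G$ whose edges split as a disjoint union $E(G)=E_x\sqcup E_y$ with both $(V,E_x)$ and $(V,E_y)$ interval graphs; conversely, any such splitting gives a drawing by taking each rectangle to be the product of its two intervals. I would record this as a lemma and use it throughout. It is also the engine behind Theorem~\ref{thm:bipartTRVG}, since a bipartite interval graph is triangle-free and chordal, hence a forest with at most $n-1$ edges.

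For part (i), first note $C^a_n=K_n$ when $n\le 2a+1$, a threshold graph and so a TRVG by Theorem~\ref{thm:istrvg}. For $n\ge 2a+2$, the only edges of $C^a_n$ not captured by the interval graph $P^a_n$ (realised by the intervals $[i,i+a]$, $0\le i\le n-1$) are the ``wrap-around'' edges joining $L=\{v_0,\dots,v_{a-1}\}$ to $H=\{v_{n-a},\dots,v_{n-1}\}$; these form a bipartite ``staircase'' $B$ in which $v_j$ is joined to the top $a-j$ vertices of $H$. I would set $E_y=E(B)\cup E(K_L)\cup E(K_H)$, the staircase together with the two (already present) end-cliques, and let $E_x$ be the rest. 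The key point is that the end-cliques supply the chords that kill the induced $4$-cycles of $B$: explicit nested intervals (each $v_{n-1-k}\in H$ as $[0,a-k]$ and each $v_j\in L$ as $[j+\tfrac12,\text{large}]$) show $(V,E_y)$ is an interval graph, and one checks that $P^a_n$ with its two end-cliques deleted is still interval, giving $(V,E_x)$.

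Parts (ii) and (iv) are explicit constructions. For (iv) I would split on $n-2a$: for $n\le 2a+1$ the graph $D^a_n$ is edgeless, for $n=2a+2$ it is a perfect matching, and for $n=2a+3$ it is a single cycle, all TRVGs by Theorem~\ref{thm:istrvg}. The only genuine case is $n=2a+4$, where $D^a_n$ is the $3$-regular circulant with connections $a+1$ and $a+2$; the distance-$(a+2)$ edges form a perfect matching $M$ and the distance-$(a+1)$ edges form a $2$-factor $F$ (one Hamilton cycle, or two cycles). Removing one suitable edge from each cycle of $F$ turns $F$ into a linear forest, and adjoining those edges to $M$ keeps $M$ a linear forest (choosing the removed edges so as not to close a $4$-cycle with $M$); both linear forests are interval graphs, so $D^a_{2a+4}$ is a TRVG. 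For part (ii), that $\overline{C_n}=D^1_n$ is a TRVG for all $n$, I would exhibit a direct drawing: the model case $\overline{C_6}$ splits as two triangles plus the three long diagonals, and I would build the analogous partition of $E(\overline{C_n})$ into two interval graphs in general.

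Finally, part (iii) is the clean negative result and is forced by Theorem~\ref{thm:compbip}. Assume $a\ge 3$ and $n\ge 2a+8$. The four consecutive vertices $\{v_0,v_1,v_2,v_3\}$ have pairwise cyclic distance at most $3\le a$, so they are independent in $D^a_n$, and likewise $\{v_{a+4},\dots,v_{a+7}\}$. Every cross pair $v_iv_j$ with $i\in\{0,1,2,3\}$ and $j\in\{a+4,\dots,a+7\}$ has $j-i\in[a+1,a+7]$, and since $n\ge 2a+8$ both $j-i$ and $n-(j-i)$ are at least $a+1$; hence every such pair is an edge, so these eight vertices induce a $K_{4,4}$. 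As $K_{4,4}$ is not a TRVG (Theorem~\ref{thm:compbip}) and induced subgraphs of TRVGs are TRVGs, $D^a_n$ is not a TRVG; the threshold $n\ge 2a+8$ is exactly the condition that two blocks of four consecutive vertices fit with both gaps exceeding $a$. I expect the main obstacle to lie not here but in the positive constructions — especially in producing a uniform drawing of $\overline{C_n}$ in part (ii), and in verifying that the residual graphs in parts (i) and (iv) are genuinely interval — while the negative part (iii) falls out cleanly from the complete-bipartite obstruction.
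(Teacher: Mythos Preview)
Your structural lemma—that $G$ is a TRVG if and only if $E(G)$ decomposes as a disjoint union $E_x\sqcup E_y$ with both $(V,E_x)$ and $(V,E_y)$ interval graphs—is correct and is a genuinely different organising principle from the paper, which proves all four parts by exhibiting explicit rectangle layouts (Figures~\ref{fig:c^a_n}, \ref{fig:k1reg}, \ref{fig:d^a_2a+4_1}, \ref{fig:d^a_2a+4_2}). Part~(iii) coincides with the paper's argument: locate an induced $K_{4,4}$ on two blocks of four consecutive vertices and invoke Theorem~\ref{thm:compbip}. Your treatment of part~(iv)—perfect matching $M$ from distance $a{+}2$, $2$-factor $F$ from distance $a{+}1$, redistribute one edge per $F$-cycle to keep both pieces linear forests—also goes through and is arguably cleaner than the paper's two case-split figures.

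The genuine gap is part~(ii). Your model decomposition of $\overline{C_6}$ into two triangles plus a perfect matching does not extend: for $n=8$ the same parity split gives two $K_4$'s on $\{1,3,5,7\}$ and $\{2,4,6,8\}$, but the remaining cross edges (all distance-$3$ pairs) form the single $8$-cycle $1\text{--}4\text{--}7\text{--}2\text{--}5\text{--}8\text{--}3\text{--}6\text{--}1$, which is not chordal, hence not interval. The paper instead observes that $D^1_n\cong C_n^{(n-3)/2}$ for odd $n$ (reducing to part~(i)) and gives a separate inductive layout for even $n$; you have supplied no substitute for either, and you yourself flag this as the expected obstacle. Part~(i) is closer to complete, but ``one checks that $P^a_n$ with its two end-cliques deleted is still interval'' hides real work when $n\le 3a-1$: in that range there are $E_x$-edges between $L$ and $H$ through the short arc, and the natural assignment (each $v_i\in L$ a tiny interval near $i+a$, each $v_j\in H$ near $j-a$) fails to realise them. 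An explicit interval model, or a direct chordal/AT-free verification, is still owed here.
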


Let us introduce a definition that will be used throughout this paper.
Given a collection of rectangles in the plane representing a graph $G$ and a subset of vertices $U \subseteq V(G)$, a \emph{bounding box of $U$} is a box which covers all rectangles representing $U$ and intersects no other rectangles.

This paper is organized as follows.
Sections~\ref{secthreshold},~\ref{sectree} and~\ref{secgrid} focus on the proof of Theorem~\ref{thm:istrvg}.
In Section~\ref{secbipartite}, we study bipartite graphs and prove Theorems~\ref{thm:bipartTRVG},~\ref{thm:compbip},~\ref{thm:bipartTRVGtorus} and~\ref{thm:compbiptorus}.
We prove Theorem~\ref{thm:poc} regarding powers of cycles in Section~\ref{secpoc}.
Finally, we leave some open problems in Section~\ref{sec:conclude}.

%Main Thm%
\section{Threshold Graph}\label{secthreshold}

In this section, we will prove Theorem~\ref{thm:istrvg}$(i)$, i.e. every threshold graph is a TRVG.

As one of the fundamental classes of graphs, threshold graphs have been defined in many ways (see~\cite{Pnoey}). A graph $G$ is a \emph{threshold graph} if we can assign a real number $r_v$ to each vertex $v$ and there is a real number $\theta$ such that for any vertex subset $U$ of $G$, $\sum_{v\in U}r_v\le\theta$ if and only if $U$ is independent in $G$. Equivalently, it is a graph that can be constructed by repeating these two operations:
	
    \begin{itemize}
\item Add an isolated vertex to the graph.

\item Add a \emph{universal vertex} to the graph, i.e. a vertex connected to all others.
    \end{itemize}

\begin{proof}[Proof of Theorem~\ref{thm:istrvg}$(i)$]
We will proceed by induction on the number of vertices. Clearly, a one-vertex graph is a TRVG.

For the inductive step, let $G$ be a threshold graph and let $v$ be the last vertex added to $G$ in the threshold graph construction.
We see that $G-v$ is also a threshold graph, and so $G-v$ is a TRVG by the induction hypothesis.
Thus, $G-v$ can be represented by a collection of rectangles in the plane.

Considering whether $v$ is isolated or universal, we can construct a rectangle for $v$ that intersects no line of sight or covers a side of the bounding box of $G-v$ respectively (see Figure~\ref{fig:thresproof}).
\begin{figure}[h]
    \centering
\begin{tikzpicture}[scale=0.6]
    \draw[very thick] (-0.4,-0.4) rectangle (4.4,4.4);
    \draw[step=0.5cm,gray,dashed] (-0.4,-0.4) grid (4.4,4.4);
    \filldraw[blue!60!white, draw=black, very thick] (0,0) rectangle (3,3.5);
    \draw (1.5,2.4) node[scale=0.8] {Bounding};
    \draw (1.5,1.75) node[scale=0.8] {box of};
    \draw (1.5,1.1) node[scale=0.8] {$G-v$};
    \filldraw[red!60!white, draw=black,very thick] (3,0) rectangle (3.5,3.5);
    \draw (3.25,1.75) node[scale=0.8] {$v$};
    \filldraw[red!60!white, draw=black,very thick] (3.5,3.5) rectangle (4,4);
    \draw (3.75,3.75) node[scale=0.8] {$v$};
 
\end{tikzpicture}
\caption{An addition of an isolated or universal vertex $v$}
\label{fig:thresproof}
\end{figure}
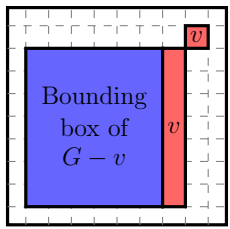
\end{proof}

Consequently, the empty graph $E_n$ and the complete graph $K_n$ are TRVGs since they are threshold graphs.
Moreover, every graph formed by adding isolated vertices or universal vertices to a TRVG is also a TRVG.

Recall that Hutchinson, Shermer, and Vince~\cite{HSV} proved that every RVG with $n$ vertices has at most $6n-20$ edges when $n \ge 5$. It follows that $K_n$ is not an RVG for $n \ge 9$.
Thus, $K_n$ is an example of a TRVG which is not an RVG.
Furthermore, a TRVG cannot have nontrivial bounds for the number of edges as an RVG since $E_n$ and $K_n$ are TRVGs.

%Main Thm%
\section{Tree}\label{sectree}

We will start with an important lemma which will be used to prove Theorem~\ref{thm:istrvg}$(ii)$, i.e. every tree is a TRVG.

The \emph{union} and the intersection of two graphs $G_1 = (V_1, E_1)$ and $G_2 = (V_2, E_2)$ are defined as the graph $G_1 \cup G_2 = (V_1 \cup V_2, E_1 \cup E_2)$ and $G_1 \cap G_2 = (V_1 \cap V_2, E_1 \cap E_2)$ respectively.

\begin{lemma}\label{lemma:disjointv}
    Let $G_1$ and $G_2$ be TRVGs with $V(G_1) \cap V(G_2)=\{v\}$.
If there is a representation of $G_i$ for each $i \in \{1,2\}$ such that, in one direction, the rectangle for $v$ sees no other rectangles then $G_1 \cup G_2$ is a TRVG.
\end{lemma}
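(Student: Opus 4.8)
The plan is to combine the two given representations into a single representation for $G_1 \cup G_2$ by placing them so that they only interact through the shared vertex $v$. By hypothesis, each $G_i$ has a representation in which the rectangle for $v$ sees no other rectangle in one of the two directions; after possibly rotating the plane by $90^\circ$, I may assume that in the representation of $G_1$ the rectangle for $v$ sees nothing \emph{vertically}, and likewise in $G_2$. The key observation is that a rotation turns any valid representation into another valid representation (horizontal and vertical lines are interchanged), so I am free to orient each piece independently.

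First I would rotate the representation of $G_1$ so that its rectangle for $v$ sees no other rectangle in the vertical direction, and rotate $G_2$ so that its rectangle for $v$ sees no other rectangle in the horizontal direction. Next I would rescale and translate the two pictures so that the two rectangles representing $v$ coincide as a single rectangle $R_v$, while the rest of $G_1$ lies entirely within the horizontal strip spanned by $R_v$ and the rest of $G_2$ lies entirely within the vertical strip spanned by $R_v$. Concretely, I would shrink $G_1$ vertically and $G_2$ horizontally and translate them so that all rectangles of $G_1 - v$ sit to one side of $R_v$ inside its horizontal band, and all rectangles of $G_2 - v$ sit below (or above) $R_v$ inside its vertical band; because $R_v$ sees nothing vertically in $G_1$ and nothing horizontally in $G_2$, this repositioning can be arranged without forcing $G_1 - v$ and $G_2 - v$ to overlap or to enter each other's strips.

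I then need to verify that the resulting collection of rectangles is exactly a representation of $G_1 \cup G_2$. The adjacencies internal to each $G_i$ are preserved because within its own strip each piece is just a rotated, rescaled copy of a valid representation, and rescaling along one axis does not change which rectangles share a horizontal or vertical line. The crucial point is that \emph{no spurious edges} are created between $G_1 - v$ and $G_2 - v$: a rectangle of $G_1 - v$ lives in the horizontal strip of $R_v$ but, by placement, avoids the vertical strip of $R_v$, whereas a rectangle of $G_2 - v$ lives in the vertical strip and avoids the horizontal one, so no horizontal or vertical line can meet both. Finally, every edge incident to $v$ in $G_1 \cup G_2$ is an edge of one of the $G_i$ and is realized by $R_v$ in the appropriate direction, and no new edge at $v$ is introduced because the direction in which $R_v$ sees the opposite piece is precisely the direction in which it was assumed blind.

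The main obstacle I anticipate is making the disjointness of the two strips rigorous while simultaneously preserving all intended sightlines — that is, ensuring that the translations and the one-directional rescalings can be chosen so that $G_1 - v$ and $G_2 - v$ occupy genuinely disjoint regions and never create a horizontal or vertical line of sight between them, yet every internal sightline of each $G_i$ survives intact. This is a geometric bookkeeping argument rather than a deep one, and I expect it can be settled cleanly by arguing that the ``blind direction'' hypothesis on $v$ is exactly what guarantees the pieces can be nested into perpendicular strips meeting only at $R_v$.
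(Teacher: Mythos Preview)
Your overall plan---overlay the two pictures so that the rectangles for $v$ coincide and arrange that $G_1-v$ and $G_2-v$ cannot see each other---is the same as the paper's. The gap is in the specific geometric move you propose: squeezing $G_1-v$ into the horizontal band of $R_v$ (and $G_2-v$ into the vertical band). With your orientation, ``$R_v$ sees nothing vertically in $G_1$'' means every rectangle of $G_1-v$ lies strictly to the left or right of $R_v$ in the $x$-projection. If in addition each such rectangle has its $y$-interval contained in that of $R_v$, then it shares a horizontal line with $R_v$, so $R_v$ \emph{sees it horizontally}. Thus your placement forces $v$ to be adjacent to every vertex of $G_1$ (and symmetrically of $G_2$), which is not part of the hypothesis. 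Note also that a global vertical rescaling of $G_1$ cannot achieve this squeezing anyway: it shrinks $R_v$'s height by the same factor, so whether a rectangle of $G_1-v$ lies in $R_v$'s horizontal band is unchanged.

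The paper's fix is to push \emph{outward} rather than squeeze inward. Orient so that $R_v$ sees nothing horizontally in $G_1$ and nothing vertically in $G_2$; then every rectangle of $G_1-v$ is entirely above or entirely below $R_v$, and every rectangle of $G_2-v$ is entirely left or right of $R_v$. Now translate the ``above'' rectangles of $G_1-v$ upward and the ``below'' ones downward (and the ``left''/``right'' parts of $G_2-v$ outward) by large amounts. Such a translation of an entire half of the picture preserves all visibilities within each $G_i$, and for sufficiently large shifts no rectangle of $G_1-v$ shares either an $x$-interval or a $y$-interval with any rectangle of $G_2-v$. Your verification paragraph then goes through verbatim.
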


\begin{proof}
Without loss of generality, consider a collection of rectangles in the plane representing $G_1$ such that the rectangle for $v$ sees no other rectangles horizontally and has size $1 \times 1$ since rotating the representation by $\frac{\pi}{2}$ and rescaling the representation in any direction do not affect the graph.
Similarly, consider a collection of rectangles in the plane representing $G_2$ such that the rectangle for $v$ sees no other rectangles vertically and has size $1 \times 1$.

We union the representation of $G_1$ and $G_2$ so that the rectangles for $v$ coincide.
Since the rectangle for $v$ sees no other rectangles of $G_1$ horizontally, each rectangle for $G_1$ is either above or below $v$.
Observe that an upward translation of the rectangles above $v$ and a downward translation of the rectangles below $v$ do not affect the graph $G_1$.
Similarly, a left shifting of the rectangles to the left of $v$ and a right shifting of the rectangles to the right of $v$ also do not affect the graph $G_2$.
Thus, we can translate other rectangles so that those from $G_1$ do not see those from $G_2$.
Hence, $G_1 \cup G_2$ is a TRVG.
\end{proof}
We are ready to prove Theorem~\ref{thm:istrvg}$(ii)$.

\begin{proof}[Proof of Theorem~\ref{thm:istrvg}$(ii)$]
Let $G$ be a tree with vertices $v_1, v_2, \dots, v_n$ such that $v_1$ is the root and the vertices are ordered by their distances to the root from small to large, i.e. $d(v_1,v_i)\le d(v_1,v_j)$ for $i<j$.
For a vertex $v_i$, let $G_i$ be the subtree consisting of the edges incident to $v_i$ going away from the root.
Then $G=G_1\cup G_2\cup\dots\cup G_n$ is a decomposition of $G$ into stars (see Figure~\ref{fig:treecompose}).
Let $\GG_i = G_1\cup G_2\cup\dots\cup G_i$.
We will prove that $\GG_i$ is a TRVG by induction on $i$.

Clearly, $\GG_1=G_1$ is a TRVG for the basis step.
For the inductive step, suppose that $\GG_{k-1}$ is a TRVG.
Since $d_{\GG_{k-1}}(v_k)=1$, the rectangle for $v_k$ sees rectangles of $\GG_{k-1}$ from one direction only.
We can represent the star $G_k$ such that the rectangle for $v_{k}$ sees other rectangles in $G_k$ from one direction only.
Since $\GG_{k-1} \cap G_k = \{v_k\}$, $\GG_k$ is also a TRVG by Lemma~\ref{lemma:disjointv}.
Thus, every tree is a TRVG.
\end{proof}
	
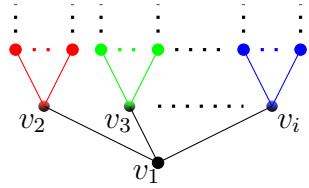
\begin{figure}
    \centering
    \begin{tikzpicture}[scale=0.75]
	\filldraw[black] (0,0) circle (0.1cm);
	\shade[bottom color=black,top color=red!60!white] (-2,1) circle (0.1cm);
	\shade[bottom color=black,top color=green!60!white] (-0.5,1) circle (0.1cm);
	\shade[bottom color=black,top color=blue!60!white] (2,1) circle (0.1cm);
	\filldraw[red] (-2.5,2) circle (0.1cm);
	\filldraw[red] (-1.5,2) circle (0.1cm);
	\filldraw[green] (-1,2) circle (0.1cm);
	\filldraw[green] (0,2) circle (0.1cm);
	\filldraw[blue] (2.5,2) circle (0.1cm);
	\filldraw[blue] (1.5,2) circle (0.1cm);
	\draw (0,0) -- (-2,1);
	\draw (0,0) -- (-0.5,1);
	\draw (0,0) -- (2,1);
	\draw[red] (-2,1) -- (-2.5,2);
	\draw[red] (-2,1) -- (-1.5,2);
	\draw[green] (-0.5,1) -- (-1,2);
	\draw[green] (-0.5,1) -- (0,2);
	\draw[blue] (2,1) -- (1.5,2);
	\draw[blue] (2,1) -- (2.5,2);
	\draw[loosely dotted, very thick] (0,1) -- (1.5,1);
	\draw[loosely dotted, very thick, red] (-2.2,2) -- (-1.8,2);
	\draw[loosely dotted, very thick, green] (-0.7,2) -- (-0.3,2);
	\draw[loosely dotted, very thick, blue] (1.8,2) -- (2.2,2);
	\draw[loosely dotted, very thick] (0.3,2) -- (1.2,2);
	\draw[loosely dotted, very thick] (-2.5,2.3) -- (-2.5,2.8);
	\draw[loosely dotted, very thick] (-1.5,2.3) -- (-1.5,2.8);
	\draw[loosely dotted, very thick] (-1,2.3) -- (-1,2.8);
	\draw[loosely dotted, very thick] (0,2.3) -- (0,2.8);
	\draw[loosely dotted, very thick] (1.5,2.3) -- (1.5,2.8);
	\draw[loosely dotted, very thick] (2.5,2.3) -- (2.5,2.8);
	\draw (-0.2,-0.2) node[rectangle] {$v_1$};
	\draw (-2.2,0.7) node[rectangle] {$v_2$};
	\draw (-0.7,0.7) node[rectangle] {$v_3$};
	\draw (2.3,0.7) node[rectangle] {$v_i$};
    \end{tikzpicture}
    \caption{$G_1$: Black, $G_2$: Red, $G_3$: Green, $G_i$: Blue}
    \label{fig:treecompose}
\end{figure}	

We follow the steps in the proofs of Theorem~\ref{thm:istrvg}$(ii)$ and Lemma~\ref{lemma:disjointv} to construct a representation of a tree as follows. 

%\begin{itemize}
%	\item Select any vertex $v$ from a tree to be the root and place its rectangle in the plane. Let $F_k$ be the set of vertices whose distance from $v$ is $k$.

%	\item Place the rectangles for vertices from $F_1$ to the left and inside the vision of rectangle for $v$ such that they do not see other rectangle.
%	Note that we can change the left to any other directions.
	
%	\item For each rectangle in $F_1$, place the rectangles of its descendants in $F_2$ below and inside the vision of it such that they do not see other rectangle.
	
%	\item Repeat the steps until all vertices have been represented (Figure~\ref{fig:treeexample1} illustrates an example of a tree representation).
%\end{itemize}

Select any vertex $v$ from the tree to be the root and place its rectangle in the plane. Let $F_k$ be the set of vertices whose distance from $v$ is $k$.
First we place the rectangle for each vertex in $F_1$ on the plane inside the horizontal vision of the rectangle for $v$ but outside some bounding box of $\{v\}$.
Then we place the rectangle for each vertex in $F_2$ on the plane inside the vertical vision of the rectangle for its neighbor in $F_1$ but outside some bounding box of $\{v\}\cup F_1$.
Continue the process by placing the rectangle for each vertex in $F_k$ on the plane inside the horizontal vision if $k$ is odd or the vertical vision if $k$ is even of the rectangle for its neighbor in $F_{k-1}$ but outside some bounding box of $\{v\}\cup F_1\cup F_2\cup\dots\cup F_{k-1}$ (see Figure~\ref{fig:treeexample1}).

\begin{figure}[h]
	\centering
	\begin{tikzpicture}[scale=0.5]
		\filldraw[black] (0,0) circle (0.1cm);
		
		\draw (0,0) -- (-3,1);
		\filldraw[red] (-3,1) circle (0.1cm);
		\draw (0,0) -- (0,1);
		\filldraw[red] (0,1) circle (0.1cm);
		\draw (0,0) -- (2,1);
		\filldraw[red] (2,1) circle (0.1cm);
		
		\draw (-3,1) -- (-4,2);
		\filldraw[green] (-4,2) circle (0.1cm);
		\draw (-3,1) -- (-2,2);
		\filldraw[green] (-2,2) circle (0.1cm);
		\draw (0,1) -- (0,2);
		\filldraw[green] (0,2) circle (0.1cm);
		\draw (0,1) -- (1,2);
		\filldraw[green] (1,2) circle (0.1cm);
		\draw (2,1) -- (2,2);
		\filldraw[green] (2,2) circle (0.1cm);
		\draw (2,1) -- (3,2);
		\filldraw[green] (3,2) circle (0.1cm);
		
		\draw (-2,2) -- (-3,3);
		\filldraw[blue] (-3,3) circle (0.1cm);
		\draw (-2,2) -- (-2,3);
		\filldraw[blue] (-2,3) circle (0.1cm);
		\draw (-2,2) -- (-1,3);
		\filldraw[blue] (-1,3) circle (0.1cm);
		\draw (1,2) -- (1,3);
		\filldraw[blue] (1,3) circle (0.1cm);
		\draw (3,2) -- (3,3);
		\filldraw[blue] (3,3) circle (0.1cm);
		
		\draw (1,3) -- (0,4);
		\filldraw[yellow] (0,4) circle (0.1cm);
		\draw (1,3) -- (1,4);
		\filldraw[yellow] (1,4) circle (0.1cm);
		\draw (1,3) -- (2,4);
		\filldraw[yellow] (2,4) circle (0.1cm);
		\draw (3,3) -- (3,4);
		\filldraw[yellow] (3,4) circle (0.1cm);
		
		\draw (2,4) -- (2,5);
		\filldraw[purple] (2,5) circle (0.1cm);
		
		\draw[very thick] (5.05,-0.95) rectangle (12.45,6.45);
		\draw[step=0.5cm,gray] (5.05,-0.95) grid (12.45,6.45);
		\filldraw[black!60!white, draw=black, very thick] (8.5,3) rectangle (9,4.5);
		\filldraw[red!60!white, draw=black, very thick] (7.5,4) rectangle (8.5,4.5);
		\filldraw[red!60!white, draw=black, very thick] (6.5,3.5) rectangle (7.5,4);
		\filldraw[red!60!white, draw=black, very thick] (5.5,3) rectangle (6.5,3.5);
		\filldraw[green!60!white, draw=black, very thick] (5.5,2.5) rectangle (6,3);
		\filldraw[green!60!white, draw=black, very thick] (6,2) rectangle (6.5,2.5);
		\filldraw[green!60!white, draw=black, very thick] (6.5,1.5) rectangle (7,2);
		\filldraw[green!60!white, draw=black, very thick] (7,1) rectangle (7.5,1.5);
		\filldraw[green!60!white, draw=black, very thick] (7.5,-0.5) rectangle (8,1);
		\filldraw[green!60!white, draw=black, very thick] (8,-0.95) rectangle (8.5,-0.5);
		\filldraw[blue!60!white, draw=black, very thick] (9,-0.5) rectangle (9.5,0);
		\filldraw[blue!60!white, draw=black, very thick] (9.5,0) rectangle (10,0.5);
		\filldraw[blue!60!white, draw=black, very thick] (10,0.5) rectangle (10.5,1);
		\filldraw[blue!60!white, draw=black, very thick] (10.5,1.5) rectangle (12,2);
		\filldraw[blue!60!white, draw=black, very thick] (12,2) rectangle (12.45,2.5);
		\filldraw[yellow!60!white, draw=black, very thick] (12,4.5) rectangle (12.45,5);
		\filldraw[yellow!60!white, draw=black, very thick] (11.5,5) rectangle (12,5.5);
		\filldraw[yellow!60!white, draw=black, very thick] (11,5.5) rectangle (11.5,6);
		\filldraw[yellow!60!white, draw=black, very thick] (10.5,6) rectangle (11,6.45);
		\filldraw[purple!60!white, draw=black, very thick] (5.05,5) rectangle (5.5,5.5);
	\end{tikzpicture}
	\caption{An example of a representation of a tree}
	\label{fig:treeexample1}
\end{figure}
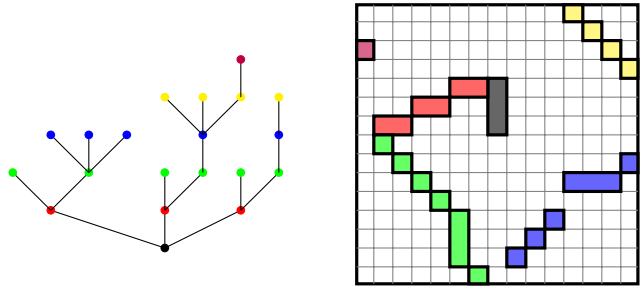

%Main Thm%
\section{Grid Graph}\label{secgrid}

It would probably be difficult to classify every planar graph, but we can find some natural planar graphs which are TRVGs.
In this section, we will consider a rectangular, a triangular and a hexagonal grid graph.

\begin{proof}[Proof of Theorem~\ref{thm:istrvg}$(iv)$]
For $i\in\Z$ and $j\in\Z$, let $v_{i,j}$ be the vertex in column $i$ and row $j$.
%The idea of this proof is to create infinite same sized bounding boxes to support each rectangle.
%Since a graph we considering is a rectangular grid graph, we create infinite unit bounding boxes as an infinite grid in the plane where the bounding box of rectangle for $v_{i,j}$ is centered at the point $(i,j)$ with a side length $1$.
We will construct a rectangle $R_{i,j}$ for $v_{i,j}$ inside a bounding box $B_{i,j}$ which is a unit square centered at the point $(i,j)$ in the plane (see Figure~\ref{fig:rectangleside}).

We would like the center of $R_{i,j}$ to be lower than that of $R_{i-1,j}$ and on the right of that of $R_{i,j-1}$ for all $i,j\in\Z$.
%must be above the line of sight of $R_{i,j}$ but still not touch $B_{i,j}$.
%Since $v_{i-1,j}$ is not join to $v_{i+1,j}$, the bottom of $R_{i-1,j}$ must not co-linear with the bottom of $R_{i,j}$ so that we have a space for $R_{i+1,j}$.
%To construct a rectangle for $v_{i,j}$ into $B_{i,j}$,
First we construct $R_{0,0}$ in $B_{0,0}$ %such that it does not touch any edges of $B_{0,0}$.
not touching the boundary.
Suppose we have constructed $R_{i,j}$ for $|i|+|j|\le k$.
Now we will construct $R_{i,j}$ for $|i|+|j|=k+1$.
Observe that each $v_{i,j}$ is adjacent to at most two vertices whose rectangles have previously been constructed.
Considering only the previously constructed rectangles, we can construct $R_{i,j}$ such that $(i)$ it sees precisely the rectangles for the neighbors of $v_{i,j}$ but not any others, $(ii)$ it does not touch the boundary of $B_{i,j}$ and $(iii)$ it does not lie entirely in the line of sight of the rectangle for any neighbor of $v_{i,j}$.
The conditions $(ii)$ and $(iii)$ ensure that the rectangles in the next step can be constructed to satisfy $(i)$.
%If $i,j>0$, we will construct a rectangle for $v_{i,j}$.
%For each rectangle for $v_{i,j}$ in its bounding box we will place a rectangle for $v_{i-1,j}$ in a line of sight of rectangle for $v_{i,j}$ such that a head of rectangle for $v_{i-1,j}$ is higher than that of $v_{i,j}$ but do not touch an edge of bounding box and a bottom of rectangle for $v_{i-1,j}$ is lower than that of $v_{i,j}$.
%In contrast, we will place a rectangle for $v_{i+1,j}$ in a line of sight of rectangle for $v_{i,j}$ such that a bottom of rectangle for $v_{i+1,j}$ is lower than that of $v_{i,j}$ but do not touch an edge of bounding box and a head of rectangle for $v_{i+1,j}$ is higher than that of $v_{i,j}$.
%However, a rectangle for $v_{i-1,j}$ must not in a line of sight if $v_{i+1,j}$.
%Similarly, we can set a rectangle for each of $v_{i,j+1}$ but considering them on a vertical sight (see Figure~\ref{fig:rectangleside}).
\end{proof}
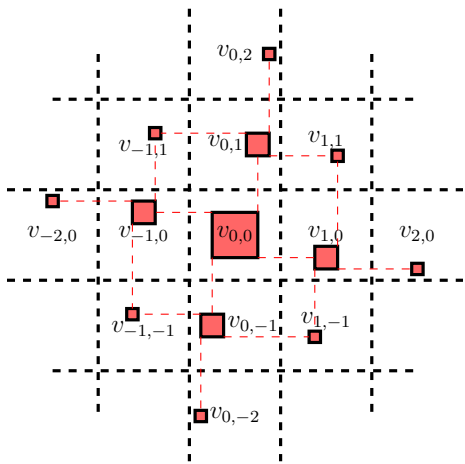
\begin{figure}[h]
	\centering
	\begin{tikzpicture} [scale=0.6]
		\filldraw[red!60!white, draw=black, very thick] (-0.5,-0.5) rectangle (0.5,0.5);
		\filldraw[red!60!white, draw=black, very thick] (0.25,1.75) rectangle (0.75,2.25);
		\filldraw[red!60!white, draw=black, very thick] (0.625,3.875) rectangle (0.875,4.125);
		\filldraw[red!60!white, draw=black, very thick] (-0.25,-1.75) rectangle (-0.75,-2.25);
		\filldraw[red!60!white, draw=black, very thick] (-0.625,-3.875) rectangle (-0.875,-4.125);
		\filldraw[red!60!white, draw=black, very thick] (1.75,-0.25) rectangle (2.25,-0.75);
		\filldraw[red!60!white, draw=black, very thick] (3.875,-0.625) rectangle (4.125,-0.875);
		\filldraw[red!60!white, draw=black, very thick] (-1.75,0.25) rectangle (-2.25,0.75);
		\filldraw[red!60!white, draw=black, very thick] (-3.875,0.625) rectangle (-4.125,0.875);
		
		\filldraw[red!60!white, draw=black, very thick] (2.125,1.875) rectangle (2.375,1.625);
		\filldraw[red!60!white, draw=black, very thick] (-2.125,-1.875) rectangle (-2.375,-1.625);
		\filldraw[red!60!white, draw=black, very thick] (1.625,-2.125) rectangle (1.875,-2.375);
		\filldraw[red!60!white, draw=black, very thick] (-1.875,2.125) rectangle (-1.625,2.375);
		
		\draw[dashed, very thick] (-1,5) -- (-1,-5);
		\draw[dashed, very thick] (1,5) -- (1,-5);
		\draw[dashed, very thick] (-5,-1) -- (5,-1);
		\draw[dashed, very thick] (-5,1) -- (5,1);
		\draw[dashed, very thick] (-4,-3) -- (4,-3);
		\draw[dashed, very thick] (-4,3) -- (4,3);
		\draw[dashed, very thick] (-3,4) -- (-3,-4);
		\draw[dashed, very thick] (3,4) -- (3,-4);
		
		\draw (0,0) node[rectangle,scale=0.8] {$v_{0,0}$};
		\draw (-0.2,2) node[rectangle,scale=0.8] {$v_{0,1}$};
		\draw (0.4,-2) node[rectangle,scale=0.8] {$v_{0,-1}$};
		\draw (2,0) node[rectangle,scale=0.8] {$v_{1,0}$};
		\draw (-2,0) node[rectangle,scale=0.8] {$v_{-1,0}$};
		\draw (-4,0) node[rectangle,scale=0.8] {$v_{-2,0}$};
		\draw (4,0) node[rectangle,scale=0.8] {$v_{2,0}$};
		\draw (0,4) node[rectangle,scale=0.8] {$v_{0,2}$};
		\draw (0,-4) node[rectangle,scale=0.8] {$v_{0,-2}$};
		\draw (-2,1.9) node[rectangle,scale=0.8] {$v_{-1,1}$};
		\draw (-2,-2.1) node[rectangle,scale=0.8] {$v_{-1,-1}$};
		\draw (2,2.1) node[rectangle,scale=0.8] {$v_{1,1}$};
		\draw (2,-1.9) node[rectangle,scale=0.8] {$v_{1,-1}$};
		
		\draw[dashed, red] (0.5,0.5) -- (0.5,1.75);
		\draw[dashed, red] (0.75,2.25) -- (0.75,3.875);
		\draw[dashed, red] (-0.5,-0.5) -- (-0.5,-1.75);
		\draw[dashed, red] (-0.75,-2.25) -- (-0.75,-3.875);
		\draw[dashed, red] (0.5,-0.5) -- (1.75,-0.5);
		\draw[dashed, red] (2.25,-0.75) -- (3.875,-0.75);
		\draw[dashed, red] (-0.5,0.5) -- (-1.75,0.5);
		\draw[dashed, red] (-2.25,0.75) -- (-3.875,0.75);
		\draw[dashed, red] (2.25,-0.25) -- (2.25,1.625);
		\draw[dashed, red] (1.75,-0.75) -- (1.75,-2.125);
		\draw[dashed, red] (-2.25,0.25) -- (-2.25,-1.625);
		\draw[dashed, red] (-1.75,0.75) -- (-1.75,2.125);
		\draw[dashed, red] (0.75,1.75) -- (2.125,1.75);
		\draw[dashed, red] (0.25,2.25) -- (-1.625,2.25);
		\draw[dashed, red] (-0.25,-2.25) -- (1.625,-2.25);
		\draw[dashed, red] (-0.75,-1.75) -- (-2.125,-1.75);
		%\draw[very thick] (0.5,-3) -- (1.3,-3.3);
		%\draw (3,-3.6) node[rectangle] {Bounding box of  $v_{0,-1}$};
	\end{tikzpicture}
	\caption{An example of representation of an infinite square grid graph}
	\label{fig:rectangleside}
\end{figure}

%Note that a finite rectangular grid graph is an induced subgraph of an infinite rectangular grid graph, so every finite rectangular grid graph is a TRVG. 

%Before we go to the proof of an infinite triangular grid graph, we first need to define a row of an infinite triangular grid graph.

%The vertices of infinite triangular grid graph can be partitioned into horizontal rows where each row induces an infinite path.
%The $j$-th vertices in row $i$ is called $v_{i,j}$ (see Figure~\ref{fig:infinitytriangulartile}).
%We see that every $v_{i,j}$ is connecting to $v_{i-1,j},v_{i,j+1}$ and $v_{i-1,j+1}$.
\begin{figure}[h]
	\centering
	\begin{tikzpicture}
		\filldraw (0,0) circle(0.1cm);
		\filldraw (1,0) circle(0.1cm);
		\filldraw (-1,0) circle(0.1cm);
		\filldraw (-0.5,0.86) circle(0.1cm);
		\filldraw (-0.5,-0.86) circle(0.1cm);
		\filldraw (0.5,0.86) circle(0.1cm);
		\filldraw (0.5,-0.86) circle(0.1cm);
		\filldraw (2,0) circle(0.1cm);
		\filldraw (-2,0) circle(0.1cm);
		\filldraw (1,1.73) circle(0.1cm);
		\filldraw (-1,1.73) circle(0.1cm);
		\filldraw (1,-1.73) circle(0.1cm);
		\filldraw (-1,-1.73) circle(0.1cm);
		\filldraw (-1.5,0.86) circle(0.1cm);
		\filldraw (1.5,0.86) circle(0.1cm);
		\filldraw (-1.5,-0.86) circle(0.1cm);
		\filldraw (1.5,-0.86) circle(0.1cm);
		\filldraw (0,1.73) circle(0.1cm);
		\filldraw (0,-1.73) circle(0.1cm);
		
		\draw[thick] (-2.3,0) -- (2.3,0);
		\draw[thick] (-1.8,0.86) -- (1.8,0.86);
		\draw[thick] (-1.8,-0.86) -- (1.8,-0.86);
		\draw[thick] (-1.3,1.73) -- (1.3,1.73);
		\draw[thick] (-1.3,-1.73) -- (1.3,-1.73);
		\draw[thick] (-1.2,-2.07) -- (1.2,2.07);
		\draw[thick] (-1.7,-1.2) -- (0.2,2.07);
		\draw[thick] (1.7,1.2) -- (-0.2,-2.07);
		\draw[thick] (-2.2,-0.34) -- (-0.8,2.07);
		\draw[thick] (2.2,0.34) -- (0.8,-2.07);
		\draw[thick] (1.2,-2.07) -- (-1.2,2.07);
		\draw[thick] (-1.7,1.2) -- (0.2,-2.07);
		\draw[thick] (1.7,-1.2) -- (-0.2,2.07);
		\draw[thick] (-2.2,0.34) -- (-0.8,-2.07);
		\draw[thick] (2.2,-0.34) -- (0.8,2.07);
		
		\draw[dotted, thick] (2.3,0) -- (2.8,0);
		\draw[dotted, thick] (-2.3,0) -- (-2.8,0);
		\draw[dotted, thick] (-1.8,0.86) -- (-2.3,0.86);
		\draw[dotted, thick] (1.8,0.86) -- (2.3,0.86);
		\draw[dotted, thick] (-1.8,-0.86) -- (-2.3,-0.86);
		\draw[dotted, thick] (1.8,-0.86) -- (2.3,-0.86);
		\draw[dotted, thick] (1.3,1.73) -- (1.8,1.73);
		\draw[dotted, thick] (1.3,-1.73) -- (1.8,-1.73);
		\draw[dotted, thick] (-1.3,1.73) -- (-1.8,1.73);
		\draw[dotted, thick] (-1.3,-1.73) -- (-1.8,-1.73);
		
		\draw[dotted, thick] (-1.7,-1.2) -- (-1.9,-1.55);
		\draw[dotted, thick] (0.2,2.07) -- (0.4,2.42);
		\draw[dotted, thick] (-1.7,1.2) -- (-1.9,1.55);
		\draw[dotted, thick] (0.2,-2.07) -- (0.4,-2.42);
		\draw[dotted, thick] (1.7,-1.2) -- (1.9,-1.55);
		\draw[dotted, thick] (-0.2,2.07) -- (-0.4,2.42);
		\draw[dotted, thick] (1.7,1.2) -- (1.9,1.55);
		\draw[dotted, thick] (-0.2,-2.07) -- (-0.4,-2.42);
		\draw[dotted, thick] (-2.2,-0.34) -- (-2.4,-0.7);
		\draw[dotted, thick] (1.2,2.07) -- (1.4,2.42);
		\draw[dotted, thick] (2.2,-0.34) -- (2.4,-0.7);
		\draw[dotted, thick] (-1.2,2.07) -- (-1.4,2.42);
		\draw[dotted, thick] (-2.2,0.34) -- (-2.4,0.7);
		\draw[dotted, thick] (1.2,-2.07) -- (1.4,-2.42);
		\draw[dotted, thick] (2.2,0.34) -- (2.4,0.7);
		\draw[dotted, thick] (-1.2,-2.07) -- (-1.4,-2.42);
		\draw[dotted, thick] (0.8,2.07) -- (0.6,2.42);
		\draw[dotted, thick] (-0.8,-2.07) -- (-0.6,-2.42);
		\draw[dotted, thick] (0.8,-2.07) -- (0.6,-2.42);
		\draw[dotted, thick] (-0.8,2.07) -- (-0.6,2.42);
		
		\draw (0,0.25) node[rectangle] {$v_{0,0}$};
		\draw (-1,0.25) node[rectangle] {$v_{-1,0}$};
		\draw (1,0.25) node[rectangle] {$v_{1,0}$};
		\draw (-2,0.25) node[rectangle] {$v_{-2,0}$};
		\draw (2,0.25) node[rectangle] {$v_{2,0}$};
		\draw (-0.5,1.11) node[rectangle] {$v_{-1,1}$};
		\draw (0.5,1.11) node[rectangle] {$v_{0,1}$};
		\draw (-1.5,1.11) node[rectangle] {$v_{-2,1}$};
		\draw (1.5,1.11) node[rectangle] {$v_{1,1}$};
		\draw (-0.5,-0.61) node[rectangle] {$v_{0,-1}$};
		\draw (0.5,-0.61) node[rectangle] {$v_{1,-1}$};
		\draw (-1.5,-0.61) node[rectangle] {$v_{-1,-1}$};
		\draw (1.5,-0.61) node[rectangle] {$v_{2,-1}$};
		\draw (0,1.96) node[rectangle] {$v_{-1,2}$};
		\draw (1,1.96) node[rectangle] {$v_{0,2}$};
		\draw (-1,1.96) node[rectangle] {$v_{-2,2}$};
		\draw (0,-1.48) node[rectangle] {$v_{1,-2}$};
		\draw (1,-1.48) node[rectangle] {$v_{2,-2}$};
		\draw (-1,-1.48) node[rectangle] {$v_{0,-2}$};
		
		\draw (3.45,0) node[rectangle] {row $0$};
		\draw (3.45,0.86) node[rectangle] {row $1$};
		\draw (3.45,1.73) node[rectangle] {row $2$};
		\draw (3.62,-0.86) node[rectangle] {row $-1$};
		\draw (3.62,-1.73) node[rectangle] {row $-2$};
	\end{tikzpicture}
	\caption{An infinite triangular grid graph}
	\label{fig:infinitytriangulartile}
\end{figure}
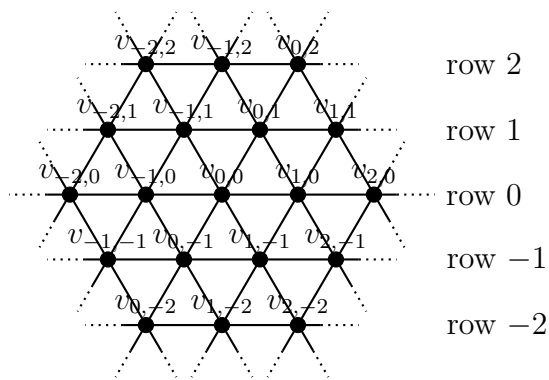
\begin{proof}[Proof of Theorem~\ref{thm:istrvg}$(v)$]% %%%%%
For $i\in\Z$ and $j\in\Z$, let $v_{i,j}$ be the vertex in column $i$ and row $j$ where we take one of the two tilted vertical directions to be a column (as shown in Figure~\ref{fig:infinitytriangulartile}).
We will construct a rectangle $R_{i,j}$ for $v_{i,j}$ inside a bounding box $B_{j}$ for row $j$ which is a unit square centered at the point $(\lceil\frac{j}{2}\rceil,\lfloor\frac{j}{2}\rfloor)$ in the plane (see Figure~\ref{fig:trueboxtri}).
Note that rectangles in $B_j$ can only see rectangles in $B_{j-1}$ and $B_{j+1}$.

For each even row $j$, we construct rectangles in $B_{j}$ representing an infinite path such that two adjacent rectangles see each other horizontally (as shown in Figure~\ref{fig:infinitytriangulartile}).

For each odd row $j$, since $v_{i,j}$ is adjacent to $v_{i,j-1},v_{i+1,j-1}$ in row $j-1$ and $v_{i-1,j+1},v_{i,j+1}$ in row $j+1$, rectangle $R_{i,j}$ must see $R_{i,j-1},R_{i+1,j-1}$ in $B_{j-1}$ and $R_{i-1,j+1},R_{i,j+1}$ in $B_{j+1}$.
So we take $R_{i,j}$ to be the intersection of the intersection of the horizontal lines of sight of $R_{i,j-1}$ and $R_{i+1,j-1}$, and the union of the vertical lines of sight of $R_{i-1,j+1}$ and $R_{i,j+1}$.
It is easy to check that the rectangles in $B_{j}$ form an infinite path.
%Since an infinite rectangular grid graph is a subgraph of infinite triangular grid graph, so we will define vertices in infinite triangular grid graph similarly to infinite rectangular grid graph, but every $v_{i,j}$ is adjacent to $v_{i-1,j+1}$ (see Figure~\ref{fig:infinitytriangulartile}).
%We call a row contains $v_{2i,j}$, an even row, and a row contains $v_{2i+1,j}$, an odd row.
%Since each vertex in row $i$ do not connect to any vertex in other row out of row $i-1$ and row $i+1$, we first create infinite unit bounding boxes as in Figure~\ref{fig:trueboxtri} where we will place the rectangles $R_{i,j}$s for all vertices of row $i$ inside the bounding box $B_i$ of row $i$.
\begin{figure}[h]
	\centering
	\begin{tikzpicture} [scale=0.6]
		\filldraw[blue!60!white, draw=black, very thick] (0,0) rectangle (2,2);
		\filldraw[blue!60!white, draw=black, very thick] (2,0) rectangle (4,2);
		\filldraw[blue!60!white, draw=black, very thick] (2,2) rectangle (4,4);
		\filldraw[blue!60!white, draw=black, very thick] (4,2) rectangle (6,4);
		\filldraw[blue!60!white, draw=black, very thick] (4,4) rectangle (6,6);
		\filldraw[blue!60!white, draw=black, very thick] (6,4) rectangle (8,6);
		
		\draw (1,1.5) node[rectangle, scale=0.6] {Bounding};
		\draw (1,1) node[rectangle, scale=0.6] {box of};
		\draw (1,0.5) node[rectangle, scale=0.6] {row $-2$};
		\draw (3,1.5) node[rectangle, scale=0.6] {Bounding};
		\draw (3,1) node[rectangle, scale=0.6] {box of};
		\draw (3,0.5) node[rectangle, scale=0.6] {row $-1$};
		\draw (3,3.5) node[rectangle, scale=0.6] {Bounding};
		\draw (3,3) node[rectangle, scale=0.6] {box of};
		\draw (3,2.5) node[rectangle, scale=0.6] {row $0$};
		\draw (5,3.5) node[rectangle, scale=0.6] {Bounding};
		\draw (5,3) node[rectangle, scale=0.6] {box of};
		\draw (5,2.5) node[rectangle, scale=0.6] {row $1$};
		\draw (5,5.5) node[rectangle, scale=0.6] {Bounding};
		\draw (5,5) node[rectangle, scale=0.6] {box of};
		\draw (5,4.5) node[rectangle, scale=0.6] {row $2$};
		\draw (7,5.5) node[rectangle, scale=0.6] {Bounding};
		\draw (7,5) node[rectangle, scale=0.6] {box of};
		\draw (7,4.5) node[rectangle, scale=0.6] {row $3$};
		
		\draw[loosely dotted, scale=2, ultra thick] (-0.05,-0.05) -- (-0.3,-0.3);
		\draw[loosely dotted, scale=2, ultra thick] (4.05,3.05) -- (4.3,3.3);
	\end{tikzpicture}
	\caption{A placement of the bounding boxes of rows}
	\label{fig:trueboxtri}
\end{figure}
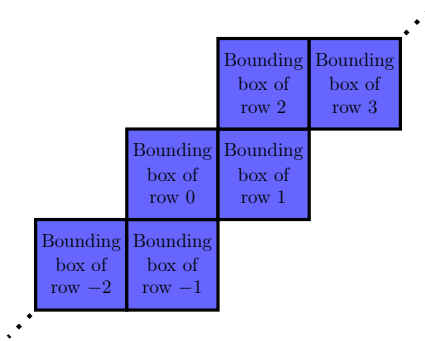
%We use idea to construct $R_{2i,j}$ similarly to each path of infinite rectangular grid graph.
%However, the width of bounding box of each row has been limit, instead of being infinite.
%We place a rectangle for $v_{2i,j}$ into bounding box of row $2i$ such that its edges must not touch any edges of its bounding box.
%We need to shrink all of rectangle to be fit in a bounding box such that rectangles for $v_{2i,j+1}$ and $v_{2i,j-1}$ is stay inside of bounding box of row $2i$ also not touch left edge and right edge of bounding box of row $2i$ to left the space for the rest of rectangles in a row, but now can touch a rectangle for $v_{2i,j}$.
%Repeat this process for every even row.
%Then each of the rectangle for $v_{2i+1,j}$ in each odd row $2i+1$ will be manipulated to lie within the intersection of visible boundary of the rectangles for $v_{2i,j}$ and $v_{2i,j+1}$ and the union of visible boundary of the rectangles for $v_{2i+2,j-1}$ and $v_{2i+2,j}$. (see Figure~\ref{fig:triangulartrvg}).
\begin{figure}[h]
	\centering
	\begin{tikzpicture} [scale=1]
		\filldraw[red!60!white, draw=black, very thick] (0,0) rectangle (1,1);
		\filldraw[red!60!white, draw=black, very thick] (1,0.75) rectangle (1.5,1.25);
		\filldraw[red!60!white, draw=black, very thick] (1.5,1.125) rectangle (1.75,1.375);
		\filldraw[red!60!white, draw=black, very thick] (0,0.25) rectangle (-0.5,-0.25);
		\filldraw[red!60!white, draw=black, very thick] (-0.5,-0.125) rectangle (-0.75,-0.375);
		\filldraw[red!60!white, draw=black, very thick] (-0.875,-0.4375) rectangle (-0.75,-0.3125);
		\filldraw[red!60!white, draw=black, very thick] (1.75,1.3125) rectangle (1.875,1.4375);
		\draw[dashed, red, very thick] (-1,-1) rectangle (2,2);
		
		\filldraw[red!60!white, draw=black, very thick] (3,3) rectangle (4,4);
		\filldraw[red!60!white, draw=black, very thick] (4,3.75) rectangle (4.5,4.25);
		\filldraw[red!60!white, draw=black, very thick] (4.5,4.125) rectangle (4.75,4.375);
		\filldraw[red!60!white, draw=black, very thick] (3,3.25) rectangle (2.5,2.75);
		\filldraw[red!60!white, draw=black, very thick] (2.5,2.875) rectangle (2.25,2.625);
		\filldraw[red!60!white, draw=black, very thick] (2.125,2.5625) rectangle (2.25,2.6875);
		\filldraw[red!60!white, draw=black, very thick] (4.75,4.3125) rectangle (4.875,4.4375);
		\draw[dashed, red, very thick] (2,2) rectangle (5,5);
		
		\filldraw[red!60!white, draw=black, very thick] (6,6) rectangle (7,7);
		\filldraw[red!60!white, draw=black, very thick] (7,6.75) rectangle (7.5,7.25);
		\filldraw[red!60!white, draw=black, very thick] (7.5,7.125) rectangle (7.75,7.375);
		\filldraw[red!60!white, draw=black, very thick] (6,6.25) rectangle (5.5,5.75);
		\filldraw[red!60!white, draw=black, very thick] (5.5,5.875) rectangle (5.25,5.625);
		\filldraw[red!60!white, draw=black, very thick] (5.125,5.5625) rectangle (5.25,5.6875);
		\filldraw[red!60!white, draw=black, very thick] (7.75,7.3125) rectangle (7.875,7.4375);
		\draw[dashed, red, very thick] (5,5) rectangle (8,8);
		
		\filldraw[red!60!white, draw=black, thick] (5.125,2.6875) rectangle (5.5,2.625);
		\filldraw[red!60!white, draw=black, very thick] (5.25,2.875) rectangle (6,2.75);
		\filldraw[red!60!white, draw=black, very thick] (5.5,3) rectangle (7,3.25);
		\filldraw[red!60!white, draw=black, very thick] (6,3.75) rectangle (7.5,4);
		\filldraw[red!60!white, draw=black, very thick] (7,4.25) rectangle (7.75,4.125);
		\filldraw[red!60!white, draw=black, thick] (7.5,4.375) rectangle (7.875,4.3125);
		
		\filldraw[red!60!white, draw=black, thick] (2.125,-0.3125) rectangle (2.5,-0.375);
		\filldraw[red!60!white, draw=black, very thick] (2.25,-0.125) rectangle (3,-0.25);
		\filldraw[red!60!white, draw=black, very thick] (2.5,0) rectangle (4,0.25);
		\filldraw[red!60!white, draw=black, very thick] (3,0.75) rectangle (4.5,1);
		\filldraw[red!60!white, draw=black, very thick] (4,1.25) rectangle (4.75,1.125);
		\filldraw[red!60!white, draw=black, thick] (4.5,1.375) rectangle (4.875,1.3125);
		
		\filldraw[red!60!white, draw=black, thick] (8.125,5.6875) rectangle (8.5,5.625);
		\filldraw[red!60!white, draw=black, very thick] (8.25,5.875) rectangle (9,5.75);
		\filldraw[red!60!white, draw=black, very thick] (8.5,6) rectangle (10,6.25);
		\filldraw[red!60!white, draw=black, very thick] (9,6.75) rectangle (10.5,7);
		\filldraw[red!60!white, draw=black, very thick] (10,7.25) rectangle (10.75,7.125);
		\filldraw[red!60!white, draw=black, thick] (10.5,7.375) rectangle (10.875,7.3125);
		
		\draw[dashed, blue, very thick] (2,-1) rectangle (5,2);
		\draw[dashed, blue, very thick] (5,2) rectangle (8,5);
		\draw[dashed, blue, very thick] (8,5) rectangle (11,8);
		
		\draw[dashed] (5.125,8) -- (5.125,2);
		\draw[dashed] (5.25,8) -- (5.25,2);
		\draw[dashed] (5.5,8) -- (5.5,2);
		\draw[dashed] (6,8) -- (6,2);
		\draw[dashed] (7,8) -- (7,2);
		\draw[dashed] (7.5,8) -- (7.5,2);
		\draw[dashed] (7.75,8) -- (7.75,2);
		\draw[dashed] (7.875,8) -- (7.875,2);
		
		\draw[dashed] (2,4.375) -- (8,4.375);
		\draw[dashed] (2,4.3125) -- (8,4.3125);
		\draw[dashed] (2,4.25) -- (8,4.25);
		\draw[dashed] (2,4.125) -- (8,4.125);
		\draw[dashed] (2,4) -- (8,4);
		\draw[dashed] (2,3.75) -- (8,3.75);
		\draw[dashed] (2,3.25) -- (8,3.25);
		\draw[dashed] (2,3) -- (8,3);
		\draw[dashed] (2,2.75) -- (8,2.75);
		\draw[dashed] (2,2.875) -- (8,2.875);
		\draw[dashed] (2,2.625) -- (8,2.625);
		\draw[dashed] (2,2.6875) -- (8,2.6875);
		
		\draw[loosely dotted, ultra thick] (-1.1,-1.1) -- (-1.4,-1.4);
		\draw[loosely dotted, ultra thick] (11.1,8.1) -- (11.4,8.4);
	\end{tikzpicture}
	\caption{A representation of an infinite triangular grid graph}
	\label{fig:triangulartrvg}
\end{figure}
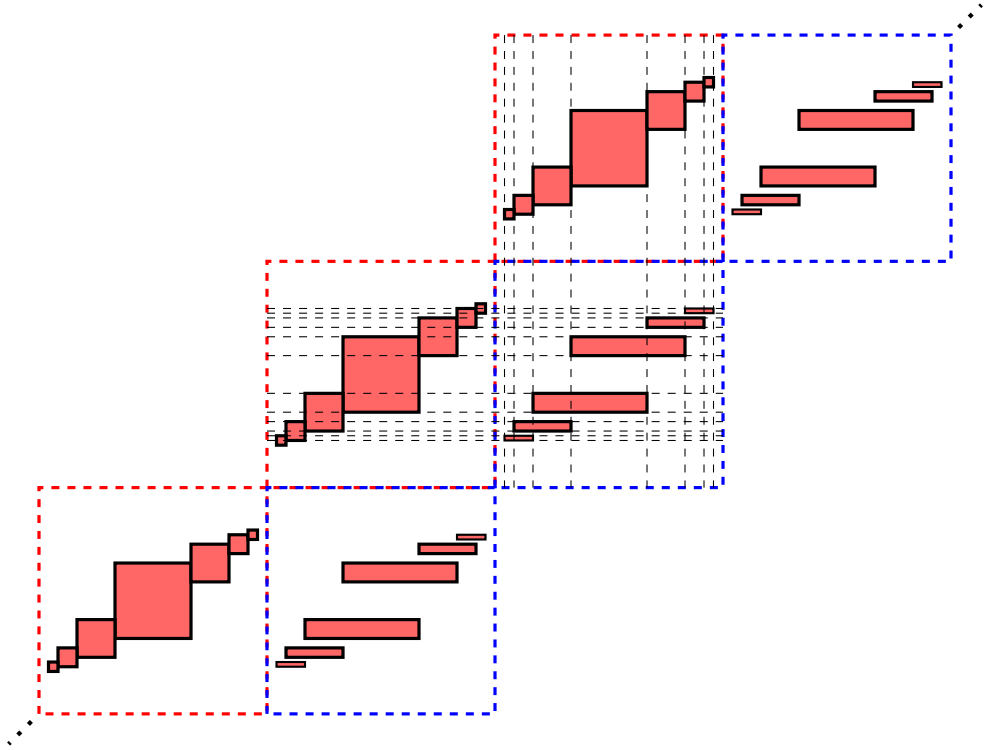
\end{proof}

\begin{proof}[Proof of Theorem~\ref{thm:istrvg}$(vi)$]
This follows from Theorem~\ref{thm:istrvg}$(v)$ since the infinite hexagonal grid graph is an induced subgraph of an infinite triangular grid graph obtained by removing the vertices $v_{a+3b,a}$ for all $a,b \in \Z$.
\end{proof}

%Main Thm%
%%%%%%%%%%%เติมย่อหน้า%%%%%%%%%%%
\section{Bipartite Graph}\label{secbipartite}

For any triangle-free TRVG, we can easily see that it is also an RVG.
Now, let us consider whether the converse holds for a triangle-free graph.
We restrict our attention to bipartite graphs.
In this section, we will prove Theorems~\ref{thm:bipartTRVG},~\ref{thm:compbip},~\ref{thm:bipartTRVGtorus} and~\ref{thm:compbiptorus}.

We color the rectangles for the vertices of a bipartite graph with green or red depending on which part they are in.
Denote the set of green rectangles by $\G$ and that of red rectangles by $\R$.

To prove Theorem~\ref{thm:bipartTRVG}, the following lemma will help us determine the maximum number of edges of a general bipartite TRVG with $n$ vertices.

\begin{lemma}\label{lemma:gseer}
Let $G$ be a bipartite TRVG represented by $p$ green rectangles $g_1, g_2, \dots, g_p$ in $\G$ and $q$ red rectangles $r_1, r_2, \dots, r_q$ in $\R$.
If $g_i$ sees $\alpha_i$ red rectangles horizontally for $i \in \{1, 2, \dots, p\}$, then
\[q \ge \alpha_1+\alpha_2+\dots+\alpha_p-(p-1).\]
\end{lemma}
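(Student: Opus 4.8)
The plan is to reduce the statement to a purely one–dimensional claim about intervals on the vertical axis. For each rectangle $X$ write $y(X)$ for the open interval obtained by projecting $X$ onto the $y$-axis; then $g_i$ sees a red rectangle $r_j$ horizontally precisely when $y(g_i)\cap y(r_j)\neq\emptyset$, since a horizontal line meets the interiors of both rectangles if and only if their $y$-projections overlap. The first observation I would record is that, because $G$ is bipartite, no two green rectangles see each other; in particular no horizontal line meets two green interiors, so the intervals $y(g_1),\dots,y(g_p)$ are pairwise disjoint. The identical reasoning shows that $y(r_1),\dots,y(r_q)$ are pairwise disjoint. These two disjointness conditions are the only properties of the representation I intend to use.

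Next I would introduce the bipartite graph $H$ on the vertex set $\{g_1,\dots,g_p\}\cup\{r_1,\dots,r_q\}$ in which $g_i$ is joined to $r_j$ exactly when $y(g_i)\cap y(r_j)\neq\emptyset$. By the remark above, $\alpha_i$ is the degree of $g_i$ in $H$, so $\alpha_1+\cdots+\alpha_p=|E(H)|$, and the inequality to be proved is equivalent to $|E(H)|\le p+q-1$. This is exactly the edge bound for a forest on $p+q$ vertices, so it suffices to prove that $H$ is acyclic.

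The crux, and the step I expect to be the main obstacle, is deducing acyclicity from the two disjointness conditions. I would argue by contradiction: suppose $H$ has a cycle. Since $H$ contains only green–red edges it is bipartite, so its cycles alternate colours and the two (necessarily distinct) neighbours of any cycle vertex lie in the opposite colour class, hence are disjoint intervals. Now choose the cycle vertex $K=(\ell,r)$ whose projection has the smallest right endpoint $r$ among all vertices of the cycle. Its two cycle-neighbours $J$ and $J'$ are distinct same-colour intervals, each overlapping $K$. Overlapping $K$ forces the left endpoint of each of $J,J'$ to be below $r$, while minimality of $r$ forces the right endpoint of each to be at least $r$; consequently each of $J,J'$ contains an interval of the form $(r-\varepsilon,r)$ for some $\varepsilon>0$. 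Taking the smaller of the two values of $\varepsilon$ exhibits a common point of $J$ and $J'$, contradicting their disjointness.

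Finally, once $H$ is known to be a forest, $|E(H)|\le(p+q)-1$, and rearranging $\alpha_1+\cdots+\alpha_p=|E(H)|\le p+q-1$ yields $q\ge\alpha_1+\cdots+\alpha_p-(p-1)$, as required. The one point to treat carefully is the tie-breaking when several cycle vertices attain the minimal right endpoint, but the only inequality used above is that each neighbour's right endpoint is at least $r$, which holds regardless of ties, so the choice of $K$ among tied vertices is immaterial.
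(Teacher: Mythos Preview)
Your proof is correct. The reduction to $y$-projections, the disjointness of same-colour intervals, and the acyclicity argument all go through as written; the tie-breaking remark is adequate since the argument only uses the weak inequality on right endpoints.

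Your route differs from the paper's. The paper orders the green rectangles from top to bottom and argues sequentially: each $g_k$ can share at most one horizontally-seen red rectangle with $g_1,\dots,g_{k-1}$ (else two reds would overlap), so the number of distinct reds seen grows by at least $\alpha_k-1$ at each step. You instead recognise the global structure: the bipartite ``horizontal visibility'' graph $H$ between two families of pairwise-disjoint intervals is a forest, whence $|E(H)|\le p+q-1$. The two arguments are cousins---the paper's ``at most one shared red'' is exactly the local obstruction to a $4$-cycle---but your formulation isolates a clean reusable lemma (two laminar interval families give an acyclic bipartite intersection graph) and makes the appearance of the tree bound $p+q-1$ transparent, while the paper's sequential count is marginally more direct and avoids naming any auxiliary graph.
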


\begin{proof}
Since any two rectangles in $\G$ do not see each other, we can relabel the green rectangles in $\G$ such that $g_i$ is higher than $g_{i+1}$ for all $i \in \{1, 2, \dots, p-1\}$.
%	Similarly, we can relabel the red rectangles $\R$ such that $r_j$ is higher than $r_{j+1}$ for all $j \in \{1, 2, \dots, n-1\}$.

First $g_1$ sees $\alpha_1$ red rectangles horizontally.
Observe that $g_1$ and $g_2$ see horizontally at most one red rectangle in common otherwise the red rectangles would see each other (see Figure~\ref{fig:gseerlem}).
So the number of red rectangles seen horizontally by $g_1$ or $g_2$ is at least $\alpha_1 + \alpha_2 -1$.
Since $g_3$ sees at most one red rectangle seen by $g_1$ or $g_2$, the number of red rectangles seen horizontally by $g_1,g_2$ or $g_3$ is at least $(\alpha_1 + \alpha_2 -1)+ \alpha_3 -1 = \alpha_1 + \alpha_2 + \alpha_3 -2$.
Similarly, the number of red rectangles seen horizontally by $g_1,g_2,\dots,g_{p-1}$ or $g_p$ is at least $\alpha_1 + \alpha_2 + \dots + \alpha_p -(p-1)$.
Hence, the result follows.
\end{proof}
\begin{figure}[h]
	\centering
	\begin{tikzpicture}[scale=0.65]
		\filldraw[green!60!white, draw=black, very thick] (0,0) rectangle (1,1.5);
		\filldraw[green!60!white, draw=black, very thick] (2,-2) rectangle (3.2,-0.2);
		\filldraw[red!60!white, draw=black, very thick] (1.5,1.1) rectangle (2,1.75);
		\filldraw[red!60!white, draw=black, very thick] (2.25,0.2) rectangle (2.75,0.8);
		\filldraw[red!60!white, draw=black, very thick] (-0.5,-0.75) rectangle (0,-0.25);
		\filldraw[red!60!white, draw=black, very thick] (0,-1.25) rectangle (0.6,-0.75);
		\filldraw[red!60!white, draw=black, very thick] (3.5,-2) rectangle (4,-1.5);
		\draw (0.5,0.75) node[rectangle] {$g_1$};
		\draw (2.6,-1.1) node[rectangle] {$g_2$};
		\draw[dashed] (1,1.3) -- (1.5,1.3);
		\draw[dashed] (1,0.5) -- (2.25,0.5);
		\draw[dashed] (0,-0.5) -- (2,-0.5);
		\draw[dashed] (0.6,-1) -- (2,-1);
		\draw[dashed] (3.2,-1.75) -- (3.5,-1.75);
		
		\draw[very thick] (5,1.95) -- (5,-2.2);
		
		\filldraw[green!60!white, draw=black, very thick] (7,0) rectangle (8,1.5);
		\filldraw[green!60!white, draw=black, very thick] (9,-2) rectangle (10.2,-0.2);
		\filldraw[red!60!white, draw=black, very thick] (8.5,1.1) rectangle (9,1.75);
		\filldraw[red!60!white, draw=black, very thick] (9.25,0.4) rectangle (9.75,1);
		\filldraw[red!60!white, draw=black, very thick] (6.2,-0.7) rectangle (6.7,0.3);
		\filldraw[red!60!white, draw=black, very thick] (7,-1.25) rectangle (7.75,-0.75);
		\filldraw[red!60!white, draw=black, very thick] (10.5,-2) rectangle (11,-1.5);
		\draw (7.5,0.75) node[rectangle] {$g_1$};
		\draw (9.6,-1.1) node[rectangle] {$g_2$};
		\draw[dashed] (8,1.3) -- (8.5,1.3);
		\draw[dashed] (8,0.7) -- (9.25,0.7);
		\draw[dashed] (6.7,0.15) -- (7,0.15);
		\draw[dashed] (6.7,-0.45) -- (9,-0.45);
		\draw[dashed] (7.75,-1) -- (9,-1);
		\draw[dashed] (10.2,-1.75) -- (10.5,-1.75);
	\end{tikzpicture}
	\caption{An example of Lemma~\ref{lemma:gseer}}
	\label{fig:gseerlem}
\end{figure}
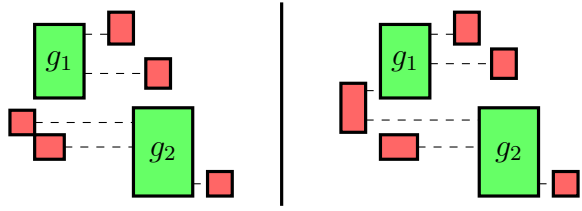	
% \begin{table}[h]
% 	\begin{center}
% 	\begin{tabular}{ccccc}
% 		\hline
% 		\multicolumn{1}{|c|}{$n$} & \multicolumn{1}{c|}{$m$} & \multicolumn{1}{c|}{$\alpha_1$} & \multicolumn{1}{c|}{$\alpha_2$} & \multicolumn{1}{c|}{$n \ge \alpha_1+\alpha_2-(m-1)$} \\ \hline
% 		\multicolumn{1}{|c|}{$5$} & \multicolumn{1}{c|}{$2$} & \multicolumn{1}{c|}{$2$}   & \multicolumn{1}{c|}{$3$}   & \multicolumn{1}{c|}{$5 \ge 2+3-(2-1) = 4$} \\ \hline
% 		\multicolumn{1}{|c|}{$5$} & \multicolumn{1}{c|}{$2$} & \multicolumn{1}{c|}{$3$}   & \multicolumn{1}{c|}{$3$}   & \multicolumn{1}{c|}{$5 \ge 3+3-(2-1) = 5$} \\ \hline
% 		\multicolumn{1}{l}{}      & \multicolumn{1}{l}{}     & \multicolumn{1}{l}{}       & \multicolumn{1}{l}{}       & \multicolumn{1}{l}{}                      
% 	\end{tabular}
% 	\caption{The inequalities for Lemma~\ref{lemma:gseer}}
% 	\label{table:1}
% 	\end{center}
% \end{table}
% Figure~\ref{fig:gseerlem} illustrates an example of Lemma~\ref{lemma:gseer}.
% We present the inequalities of bipartite graphs in Table~\ref{table:1}.

\begin{proof}[Proof of Theorem~\ref{thm:bipartTRVG}]
Let $G$ be a bipartite graph on $n$ vertices and let $g_1, g_2, \dots, g_p$ be green rectangles representing the first part.
Suppose $g_i$ sees $\beta_i$ red rectangles vertically and $g_i$ sees $\gamma_i$ red rectangles horizontally for $i \in \{1, 2, \dots, p\}$.
By Lemma~\ref{lemma:gseer}, we have
\[n-p \ge \beta_1+\beta_2+\dots+\beta_p-(p-1)\]
and
\[n-p \ge \gamma_1+\gamma_2+\dots+\gamma_p-(p-1).\]
So
\begin{align*}
2(n-p) & \ge(\beta_1+\beta_2+\dots+\beta_p+\gamma_1+\gamma_2+\dots+\gamma_p)-2(p-1)\\
    & \ge e(G)-2p+2
\end{align*}
where $e(G)$ is the number of edges of $G$.
Hence, $2n-2 \ge e(G)$.
Thus, a bipartite TRVG with $n$ vertices has at most $2n-2$ edges.

This is best possible for $n\ge7$ since we can construct a bipartite TRVG with $n\ge7$ vertices that has exactly $2n-2$ edges (see Figure~\ref{fig:n-4and4}).
\begin{figure}[h]
	\centering
	\begin{tikzpicture}[scale=0.6]
		\draw[very thick] (0.1,0.6) rectangle (4.9,5.4);
		\draw[step=0.5cm,gray,dashed] (0.1,0.6) grid (4.9,5.4);
		\filldraw[red!60!white, draw=black, very thick] (0.5,3) rectangle (1.5,4); 
		\filldraw[red!60!white, draw=black, very thick] (1.5,4) rectangle (2.5,5);
		\filldraw[red!60!white, draw=black, very thick] (2.5,1) rectangle (3.5,2);
		\filldraw[red!60!white, draw=black, very thick] (3.5,2) rectangle (4.5,3);
		\filldraw[green!60!white, draw=black, very thick] (2,2.5) rectangle (3,3.5);
		\filldraw[green!60!white, draw=black, very thick] (3,3.5) rectangle (4,4.5);
		\filldraw[green!60!white, draw=black,very thick] (1.3,1.8) rectangle (2,2.5);
		\filldraw[green!60!white, draw=black,very thick] (0.5,1) rectangle (0.75,1.25);
		\filldraw[green!60!white, draw=black,very thick] (1.3,1.8) rectangle (1.05,1.55);
		\draw[dotted, very thick] (1,1.5) -- (0.8,1.3);
	\end{tikzpicture}
	\caption{A representation of a bipartite graph with $2n-2$ edges}
	\label{fig:n-4and4}
\end{figure}
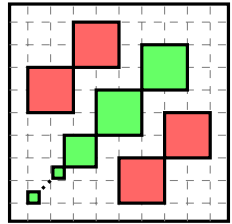
\end{proof}

However, the converse of Theorem~\ref{thm:bipartTRVG} is not true by considering a large bipartite graph with fewer than $2n-2$ edges containing $K_{4,4}$ as an induced subgraph.

\begin{proof}[Proof of Theorem~\ref{thm:compbip}]
    Figure~\ref{fig:compbip} shows representations of $K_{1,q},K_{2,q}$ and $K_{3,4}$. Since $K_{3,3}$ is an induced subgraph of $K_{3,4}$, it is a TRVG.

    Conversely, $K_{3,5}$ and $K_{4,4}$ are non-TRVG by Theorem~\ref{thm:bipartTRVG} since $2|K_{3,5}|-2=14<15=e(K_{3,5})$ and $2|K_{4,4}|-2=14<16=e(K_{4,4})$.
    For $p\le q,$ if $p\ge 3$ and $(p,q)\not\in\{(3,3),(3,4)\},$ then $K_{p,q}$ contains either $K_{3,5}$ or $K_{4,4}$ as an induced subgraph, and hence it is a non-TRVG.
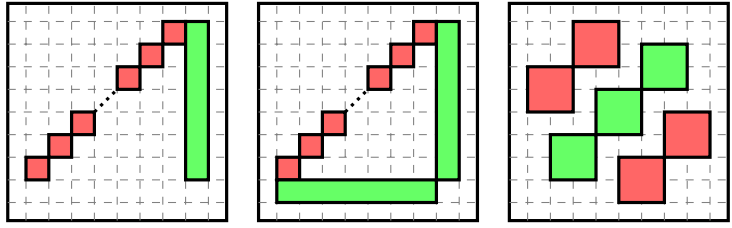
\begin{figure}[h]
	\centering
	\begin{tikzpicture}[scale=0.6]
	\draw[very thick] (-2.4,-3.4) rectangle (2.4,1.4);
	\draw[step=0.5cm,gray,dashed] (-2.4,-3.4) grid (2.4,1.4);
	\filldraw[red!60!white, draw=black, very thick] (-2,-2.5) rectangle (-1.5,-2); 
	\filldraw[red!60!white, draw=black, very thick] (-1.5,-2) rectangle (-1,-1.5);
        \filldraw[red!60!white, draw=black, very thick] (-1,-1.5) rectangle (-0.5,-1);
        \draw[dotted,very thick] (-0.5,-1)--(0,-0.5);
	\filldraw[red!60!white, draw=black, very thick] (0,-0.5) rectangle (0.5,0);
	\filldraw[red!60!white, draw=black, very thick] (0.5,0) rectangle (1,0.5);
	\filldraw[red!60!white, draw=black, very thick] (1,0.5) rectangle (1.5,1);
	\filldraw[green!60!white, draw=black,very thick] (1.5,-2.5) rectangle (2,1);

        \draw[very thick] (3.1,-3.4) rectangle (7.9,1.4);
	\draw[step=0.5cm,gray,dashed] (3.1,-3.4) grid (7.9,1.4);
	\filldraw[red!60!white, draw=black, very thick] (3.5,-2.5) rectangle (4,-2); 
	\filldraw[red!60!white, draw=black, very thick] (4,-2) rectangle (4.5,-1.5);
        \filldraw[red!60!white, draw=black, very thick] (4.5,-1.5) rectangle (5,-1);
	\draw[dotted,very thick] (5,-1)--(5.5,-0.5);
	\filldraw[red!60!white, draw=black, very thick] (5.5,-0.5) rectangle (6,0);
	\filldraw[red!60!white, draw=black, very thick] (6,0) rectangle (6.5,0.5);
        \filldraw[red!60!white, draw=black, very thick] (6.5,0.5) rectangle (7,1);
	\filldraw[green!60!white, draw=black,very thick] (7,-2.5) rectangle (7.5,1);
	\filldraw[green!60!white, draw=black,very thick] (3.5,-3) rectangle (7,-2.5);

        \draw[very thick] (8.6,-3.4) rectangle (13.4,1.4);
	\draw[step=0.5cm,gray,dashed] (8.6,-3.4) grid (13.4,1.4);
	\filldraw[red!60!white, draw=black, very thick] (9,-1) rectangle (10,0); 
	\filldraw[red!60!white, draw=black, very thick] (10,0) rectangle (11,1);
	\filldraw[red!60!white, draw=black, very thick] (11,-3) rectangle (12,-2);
	\filldraw[red!60!white, draw=black, very thick] (12,-2) rectangle (13,-1);
	\filldraw[green!60!white, draw=black, very thick] (10.5,-1.5) rectangle (11.5,-0.5);
	\filldraw[green!60!white, draw=black, very thick] (11.5,-0.5) rectangle (12.5,0.5);
	\filldraw[green!60!white, draw=black,very thick] (9.5,-2.5) rectangle (10.5,-1.5);
	\end{tikzpicture}
	\caption{Representations of $K_{1,q},K_{2,q}$ and $K_{3,4}$}
	\label{fig:compbip}
\end{figure}
\end{proof}

The proof of Lemma~\ref{lemma:gseer} can be adapted to prove an analogous result for the torus.
The difference is $g_p$ and $g_1$ might see horizontally one red rectangle in common (see Figure~\ref{fig:gseerlemtorus}).

\begin{figure}[h]
	\centering
	\begin{tikzpicture}[scale=0.6]
    	\filldraw[green!60!white, draw=black, very thick] (0,1) rectangle (1,2.5);
		\filldraw[green!60!white, draw=black, very thick] (2,-2) rectangle (3.2,-0.2);
		\filldraw[red!60!white, draw=black, very thick] (1.3,2.1) rectangle (1.8,3.25);
            \filldraw[red!60!white, draw=black, very thick] (1.3,-1.75) rectangle (1.8,-2.5);
		\filldraw[red!60!white, draw=black, very thick] (2.25,1.2) rectangle (2.75,1.8);
		\filldraw[red!60!white, draw=black, very thick] (-0.5,-0.6) rectangle (0,-0.1);
		\filldraw[red!60!white, draw=black, very thick] (0,-1.15) rectangle (0.6,-0.6);
		\filldraw[red!60!white, draw=black, very thick] (3.5,-1.7) rectangle (4,-1.2);
		\draw (0.5,1.75) node[rectangle] {$g_1$};
		\draw (2.6,-1.1) node[rectangle] {$g_p$};
        \draw (1.55,2.75) node[rectangle,scale=0.8] {$r_1$};
        \draw (1.55,-2.2) node[rectangle,scale=0.8] {$r_1$};
		\draw[dashed] (1,2.3) -- (1.3,2.3);
		\draw[dashed] (1,1.5) -- (2.25,1.5);
		\draw[dashed] (0,-0.35) -- (2,-0.35);
		\draw[dashed] (0.6,-0.9) -- (2,-0.9);
		\draw[dashed] (3.2,-1.45) -- (3.5,-1.45);
            \draw[dashed] (1.8,-1.875) -- (2,-1.875);
            \draw (1.25,0.6) node{$\vdots$};
            \draw[very thick,->,gray!80!black] (-1,3.25)--(1.75,3.25);
            \draw[very thick,->,gray!80!black] (-1,-2.5)--(1.75,-2.5);
            \draw[very thick,->,gray!80!black] (-1,-2.5)--(-1,0.375);
            \draw[very thick,->,gray!80!black] (4.5,-2.5)--(4.5,0.375);
            \draw[very thick,gray!80!black] (-1,-2.5) rectangle (4.5,3.25);
	\end{tikzpicture}
	\caption{An example of Lemma~\ref{lemma:gseertorus}}
	\label{fig:gseerlemtorus}
\end{figure}
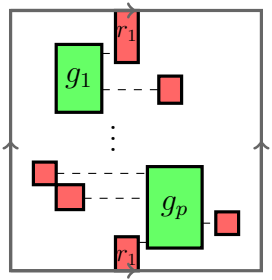	

\begin{lemma}\label{lemma:gseertorus}
Let $G$ be a bipartite TRVG with respect to the torus represented by $p$ green rectangles $g_1, g_2, \dots, g_p$ in $\G$ and $q$ red rectangles $r_1, r_2, \dots, r_q$ in $\R$.
If $g_i$ sees $\alpha_i$ red rectangles horizontally for $i \in \{1, 2, \dots, p\}$, then
\[q \ge \alpha_1+\alpha_2+\dots+\alpha_p-p.\]
\end{lemma}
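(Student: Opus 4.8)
The plan is to mirror the proof of Lemma~\ref{lemma:gseer}, replacing the linear vertical order of the green rectangles by a cyclic one and accounting for the single extra coincidence that the wrap-around of the torus permits, exactly as illustrated in Figure~\ref{fig:gseerlemtorus}. Since a rectangle is transparent, a green rectangle sees a red rectangle horizontally precisely when their vertical projections overlap. I would therefore first translate the statement into one about arcs on a circle: as the vertical coordinate on the torus is periodic, each rectangle projects to an arc of a fixed circle, the $p$ green arcs are pairwise disjoint because distinct green rectangles are non-adjacent, and likewise the $q$ red arcs are pairwise disjoint. With this dictionary, $\alpha_i$ is exactly the number of red arcs meeting the arc of $g_i$.

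The crucial structural fact is that $p$ pairwise disjoint arcs cut the circle into exactly $p$ complementary gap arcs, whereas $p$ disjoint intervals on the line (as in Lemma~\ref{lemma:gseer}) leave only $p-1$ gaps strictly between consecutive intervals; the extra gap is precisely the wrap-around region between the lowest and highest green rectangles, and this single discrepancy is what weakens the bound from $-(p-1)$ to $-p$. I would then count green-red incidences in two ways. Writing $m_R$ for the number of green arcs met by a red arc $R$, double counting gives $\alpha_1+\dots+\alpha_p=\sum_R m_R$, while the number of red arcs is $q=\sum_R 1$.

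To bound the difference I would use disjointness twice. Because $R$ is a connected arc and the green arcs are disjoint, the green arcs met by $R$ form a consecutive run in the cyclic order, so $R$ contains each of the $m_R-1$ gap arcs lying strictly between them. Since the red arcs are pairwise disjoint, no gap arc is contained in two different red arcs, and there are only $p$ gaps in total, whence $\sum_R \max(m_R-1,0)\le p$. Combining everything,
\[
\alpha_1+\dots+\alpha_p-q=\sum_R(m_R-1)\le\sum_R\max(m_R-1,0)\le p,
\]
which rearranges to the desired inequality $q\ge\alpha_1+\dots+\alpha_p-p$.

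I expect the main obstacle to be the careful justification of the consecutive-run claim and of the containment of the interior gaps, both of which rest on the connectivity of a red arc together with the disjointness of the green arcs, and which need a little attention near the wrap so as not to miscount: a red arc meeting all $p$ greens still covers at most $p-1$ gaps, so the total stays bounded by $p$. Degenerate configurations—green arcs that merely touch at a point, or a red arc abutting the endpoints of its run—should be handled by working with interiors, just as in the planar lemma. Finally I would remark that letting the circumference of the circle tend to infinity recovers Lemma~\ref{lemma:gseer}, since the wrap-around gap then disappears and only $p-1$ gaps remain.
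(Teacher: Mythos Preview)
Your argument is correct, and the core insight---that the $y$-projections of the greens are pairwise disjoint arcs on the $y$-circle, likewise for the reds, so the incidence count is controlled by how many of the $p$ gaps a red arc can swallow---is exactly the engine behind the paper's proof as well. The paper organises this sequentially: processing the greens in cyclic order, each new $g_k$ can share at most one red with the earlier greens because any such shared red must cover the gap adjacent to $g_k$, and the torus adds one extra possible share at the wrap between $g_p$ and $g_1$. You instead sum from the red side and bound $\sum_R(m_R-1)$ by the total number of gaps. These are dual bookkeepings of the same covering fact; yours is arguably tidier on the torus because the cyclic symmetry is built in rather than patched on as an endpoint correction. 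One small slip worth fixing: a red arc meeting all $p$ greens may well cover all $p$ gaps (take both its endpoints inside the same green arc), not ``at most $p-1$''; fortunately this is harmless, since your inequality only needs the \emph{lower} bound $\#\{\text{gaps contained in }R\}\ge m_R-1$, and disjointness of the red arcs still caps the total at $p$.
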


We can use Lemma~\ref{lemma:gseertorus} to prove Theorem~\ref{thm:bipartTRVGtorus} similar to when we used Lemma~\ref{lemma:gseer} to prove Theorem~\ref{thm:bipartTRVG}.

\begin{proof}[Proof of Theorem~\ref{thm:bipartTRVGtorus}]
Let us use the same notations as the proof of Theorem~\ref{thm:bipartTRVG}.
By Lemma~\ref{lemma:gseertorus}, we have $n-p \ge \beta_1+\beta_2+\dots+\beta_p-p$ and $n-p \ge \gamma_1+\gamma_2+\dots+\gamma_p-p$.
So $2(n-p)\ge(\beta_1+\beta_2+\dots+\beta_p+\gamma_1+\gamma_2+\dots+\gamma_p)-2p\ge e(G)-2p$.
Hence, $2n \ge e(G)$.
Thus, a bipartite TRVG with respect to the torus on $n$ vertices has at most $2n$ edges.

This is best possible for $n\ge8$ since we can construct a bipartite TRVG with respect to the torus on $n$ vertices that has exactly $2n$ edges (see Figure~\ref{fig:n-4and4torus}).
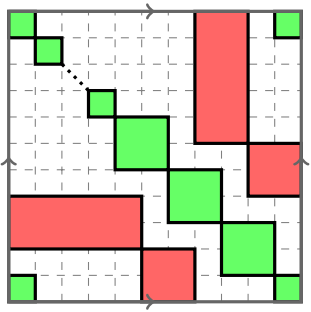
\begin{figure}[h]
	\centering
	\begin{tikzpicture} [scale=0.35]
        \draw[step=1cm,gray,dashed] (0,0) grid (11,11);
        \filldraw[red!60!white, draw=black, very thick] (0,2) rectangle (5,4); 
	\filldraw[red!60!white, draw=black, very thick] (5,0) rectangle (7,2);
	\filldraw[red!60!white, draw=black, very thick] (9,11) rectangle (7,6);
	\filldraw[red!60!white, draw=black, very thick] (9,6) rectangle (11,4);
	\filldraw[green!60!white, draw=black,very thick] (0,11) rectangle (1,10);
        \filldraw[green!60!white, draw=black,very thick] (1,10) rectangle (2,9);
        \draw[dotted,very thick] (2,9)--(3,8);
        \filldraw[green!60!white, draw=black,very thick] (3,8) rectangle (4,7);
        \filldraw[green!60!white, draw=black,very thick] (4,7) rectangle (6,5);
        \filldraw[green!60!white, draw=black,very thick] (6,5) rectangle (8,3);
        \filldraw[green!60!white, draw=black,very thick] (8,3) rectangle (10,1);
        \filldraw[green!60!white, draw=black,very thick] (10,1) rectangle (11,0);
        \filldraw[green!60!white, draw=black,very thick] (0,0) rectangle (1,1);
        \filldraw[green!60!white, draw=black,very thick] (10,10) rectangle (11,11);
        
        \draw[very thick,->,gray!80!black] (0,0)--(0,5.5);
        \draw[very thick,->,gray!80!black] (0,11)--(5.5,11);
        \draw[very thick,->,gray!80!black] (0,0)--(5.5,0);
        \draw[very thick,->,gray!80!black] (11,0)--(11,5.5);
        \draw[very thick,gray!80!black] (0,0) rectangle (11,11);
    \end{tikzpicture}
	\caption{A representation of a bipartite graph on $2n$ edges with respect to torus}
	\label{fig:n-4and4torus}
\end{figure}
\end{proof}

\begin{proof}[Proof of Theorem~\ref{thm:compbiptorus}]
Since a TRVG with respect to the plane is a TRVG with respect to the torus, by Theorem~\ref{thm:compbip}, $K_{p,q}$ ($p\le q$) is a TRVG with respect to the torus if $p\le 2$ or $(p,q)\in\{(3,3),(3,4)\}$.
Figure~\ref{fig:k36} shows representations of $K_{3,6}$ and $K_{4,4}$ with respect to the torus. Since $K_{3,5}$ is an induced subgraph of $K_{3,6}$, it is also a TRVG.

Conversely, $K_{3,7}$ and $K_{4,5}$ are non-TRVG with respect to the torus by Theorem~\ref{thm:bipartTRVGtorus} since $2|K_{3,7}|=20<21=e(K_{3,7})$ and $2|K_{4,5}|=18<20=e(K_{4,5})$.
For $p\le q,$ if $p\ge 3$ and $(p,q)\not\in\{(3,3),(3,4),(3,5),(3,6),(4,4)\},$ then $K_{p,q}$ contains either $K_{3,7}$ or $K_{4,5}$ as an induced subgraph, and hence it is a non-TRVG with respect to the torus.
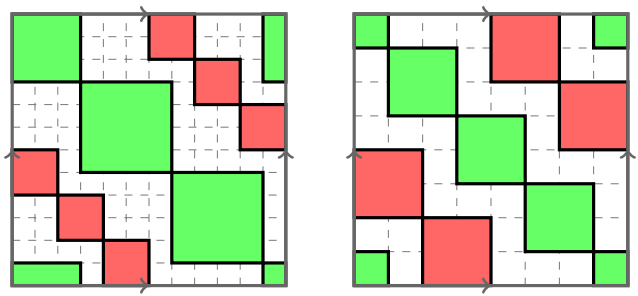
\begin{figure}[h]
	\centering
	\begin{tikzpicture} [scale=0.6]
	\draw[step=0.5cm,gray,dashed] (-3,-3) grid (3,3);
        \filldraw[red!60!white, draw=black, very thick] (0,3) rectangle (1,2); 
	\filldraw[red!60!white, draw=black, very thick] (1,2) rectangle (2,1);
        \filldraw[red!60!white, draw=black, very thick] (2,1) rectangle (3,0);
	\filldraw[red!60!white, draw=black, very thick] (-3,0) rectangle (-2,-1);
	\filldraw[red!60!white, draw=black, very thick] (-2,-1) rectangle (-1,-2);
	\filldraw[red!60!white, draw=black, very thick] (-1,-2) rectangle (0,-3);
	\filldraw[green!60!white, draw=black,very thick] (-3,1.5) rectangle (-1.5,3);
        \filldraw[green!60!white, draw=black,very thick] (-1.5,-0.5) rectangle (0.5,1.5);
        \filldraw[green!60!white, draw=black,very thick] (0.5,-2.5) rectangle (2.5,-0.5);
        \filldraw[green!60!white, draw=black,very thick] (2.5,-3) rectangle (3,-2.5);
        \filldraw[green!60!white, draw=black,very thick] (2.5,3) rectangle (3,1.5);
        \filldraw[green!60!white, draw=black,very thick] (-3,-3) rectangle (-1.5,-2.5);
        
        \draw[very thick,->,gray!80!black] (-3,-3)--(-3,0);
        \draw[very thick,->,gray!80!black] (-3,3)--(0,3);
        \draw[very thick,->,gray!80!black] (3,-3)--(3,0);
        \draw[very thick,->,gray!80!black] (-3,-3)--(0,-3);
        \draw[very thick,gray!80!black] (-3,-3) rectangle (3,3);
	
        \draw[step=0.75cm,gray,dashed] (4.5,-3) grid (10.5,3);
        \filldraw[red!60!white, draw=black, very thick] (7.5,3) rectangle (9,1.5); 
	\filldraw[red!60!white, draw=black, very thick] (9,1.5) rectangle (10.5,0);
	\filldraw[red!60!white, draw=black, very thick] (4.5,0) rectangle (6,-1.5);
	\filldraw[red!60!white, draw=black, very thick] (6,-1.5) rectangle (7.5,-3);
	\filldraw[green!60!white, draw=black,very thick] (4.5,2.25) rectangle (5.25,3);
        \filldraw[green!60!white, draw=black,very thick] (5.25,2.25) rectangle (6.75,0.75);
        \filldraw[green!60!white, draw=black,very thick] (6.75,0.75) rectangle (8.25,-0.75);
        \filldraw[green!60!white, draw=black,very thick] (8.25,-0.75) rectangle (9.75,-2.25);
        \filldraw[green!60!white, draw=black,very thick] (10.5,-3) rectangle (9.75,-2.25);
        \filldraw[green!60!white, draw=black,very thick] (10.5,3) rectangle (9.75,2.25);
        \filldraw[green!60!white, draw=black,very thick] (4.5,-3) rectangle (5.25,-2.25);
        
        \draw[very thick,->,gray!80!black] (4.5,-3)--(4.5,0);
        \draw[very thick,->,gray!80!black] (4.5,3)--(7.5,3);
        \draw[very thick,->,gray!80!black] (10.5,-3)--(10.5,0);
        \draw[very thick,->,gray!80!black] (4.5,-3)--(7.5,-3);
        \draw[very thick,gray!80!black] (4.5,-3) rectangle (10.5,3);
    \end{tikzpicture}
	\caption{Representations of $K_{3,6}$ and $K_{4,4}$ with respect to torus}
	\label{fig:k36}
\end{figure}
\end{proof}

%Main Thm%
\section{Power of Cycle}\label{secpoc}

In this section, we will observe that a cycle is a TRVG and prove Theorem~\ref{thm:poc}.
\begin{proof}[Proof of Theorem~\ref{thm:istrvg}$(iii)$]
Figure~\ref{fig:circle} shows a representation of a cycle.
\end{proof}

\begin{figure}[h]
	\centering
	\begin{tikzpicture}[scale=0.5]
		\filldraw[red!60!white, draw=black, very thick] (0,0) rectangle (0.5,1);
		\filldraw[red!60!white, draw=black, very thick] (0.5,0.5) rectangle (1,1.5);
		\filldraw[red!60!white, draw=black, very thick] (1,1) rectangle (1.5,2);
		\filldraw[red!60!white, draw=black, very thick] (1.5,1.5) rectangle (2,2.5);
		\draw[dotted, very thick] (2.2,2.3) -- (2.8,2.95);
		\filldraw[red!60!white, draw=black, very thick] (3,2.75) rectangle (3.5,3.75);
		\filldraw[red!60!white, draw=black, very thick] (3.5,3.25) rectangle (4,4.25);
		\filldraw[red!60!white, draw=black, very thick] (3.5,0) rectangle (4,0.5);
	\end{tikzpicture}
	\caption{A representation of a cycle}
	\label{fig:circle}
\end{figure}
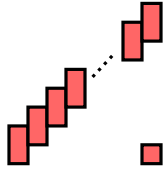

% For the $a$-th power of a cycle of length $n$, we consider the largest noncomplete graph, that is $C_n^{\frac{n}{2}-1}$ when $n$ is even.
% We will show that this is a TRVG.
The same idea can be extended to powers of cycles.

\begin{proof}[Proof of Theorem~\ref{thm:poc}$(i)$]
Figure~\ref{fig:c^a_n} shows a representation of a power of a cycle $C^a_n$ where $1\le a\le \frac{n-1}{2}$.
For $a>\frac{n-1}{2}$, $C^a_n$ is a complete graph.
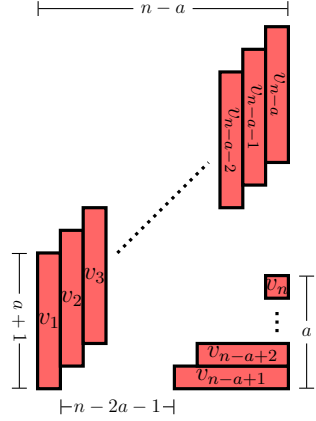
\begin{figure}[h]
	\centering
	\begin{tikzpicture}[scale=0.6]
		\filldraw[red!60!white, draw=black, very thick](0,0)rectangle(0.5,3);
		\draw (0.25,1.5)node[scale=0.8]{$v_1$};
		\filldraw[red!60!white, draw=black, very thick](0.5,0.5)rectangle(1,3.5);
		\draw (0.75,2)node[scale=0.8]{$v_2$};
		\filldraw[red!60!white, draw=black, very thick](1,1)rectangle(1.5,4);
		\draw (1.25,2.5)node[scale=0.8]{$v_3$};
		\draw[dotted, very thick] (1.75,3) -- (3.75,5);
		\filldraw[red!60!white, draw=black, very thick](4,4)rectangle(4.5,7);
		\draw (4.25,5.5)node[rotate=-90,scale=0.75]{$v_{n-a-2}$};
		\filldraw[red!60!white, draw=black, very thick](4.5,4.5)rectangle(5,7.5);
		\draw (4.75,6)node[rotate=-90,scale=0.75]{$v_{n-a-1}$};
		\filldraw[red!60!white, draw=black, very thick](5,5)rectangle(5.5,8);
		\draw (5.25,6.5)node[rotate=-90,scale=0.75]{$v_{n-a}$};
		
		\filldraw[red!60!white, draw=black, very thick](3,0)rectangle(5.5,0.5);
		\draw (4.25,0.25)node[scale=0.8]{$v_{n-a+1}$};
		\filldraw[red!60!white, draw=black, very thick](3.5,0.5)rectangle(5.5,1);
		\draw (4.5,0.75)node[scale=0.8]{$v_{n-a+2}$};
		\draw[dotted, very thick] (5.25,1.25) -- (5.25,1.75);
		\filldraw[red!60!white, draw=black, very thick](5,2)rectangle(5.5,2.5);
		\draw (5.25,2.25)node[scale=0.8]{$v_n$};
		\draw (-0.4,1.5)node[rotate=-90,scale=0.6]{$a+1$};
		\draw[|-](-0.4,3)--(-0.4,2.1);
		\draw[|-](-0.4,0)--(-0.4,1);
		\draw (5.9,1.25)node[scale=0.6]{$a$};
		\draw[|-](5.9,2.5)--(5.9,1.5);
		\draw[|-](5.9,0)--(5.9,1);
		\draw (1.75,-0.4)node[scale=0.6]{$n-2a-1$};
		\draw[|-](0.5,-0.4)--(0.75,-0.4);
		\draw[|-](3,-0.4)--(2.75,-0.4);
		\draw (2.75,8.4)node[scale=0.6]{$n-a$};
		\draw[|-](0,8.4)--(2.1,8.4);
		\draw[|-](5.5,8.4)--(3.4,8.4);
	\end{tikzpicture}
	\caption{A representation of $C^a_n$}
	\label{fig:c^a_n}
\end{figure}
\end{proof}
Now we focus on the complement of a power of a cycle.

\begin{proof}[Proof of Theorem~\ref{thm:poc}$(ii)$]
% We will describe how we construct the representations in Figure~\ref{fig:k1reg}.
If $n$ is odd then $D^1_n$ is isomorphic to $C^{\frac{n-3}{2}}_n$ which is a TRVG by Theorem~\ref{thm:poc}$(i)$.
Suppose $n$ is even.
Let $v_1, v_2, \dots, v_n$ be the vertices of $D^1_n$ as in the definition.
First since $v_1$ is not adjacent to $v_2$, we place the rectangles for $v_1$ and $v_2$ not seeing each other (see Figure~\ref{fig:k1reg}).
% so we place two rectangles representing these two vertices in the plane not to see each other.
Since $v_3$ is adjacent to $v_1$ but not $v_2$, we place the rectangle for $v_3$ inside the vision of the rectangle for $v_1$ but outside that for $v_2$.

Continuing in this fashion, for $i \in \{3,4,\dots,n-1\}$, we place the rectangle for $v_i$ inside the vision of the bounding box of $v_1, v_2, \dots, v_{i-2}$ but outside that for $v_{i-1}$.
% So we place $v_i$ in the plane in the vision of the rectangles of $v_1, v_2, \dots, v_{i-2}$ outside that of $v_{i-1}$ and outside of bounding box of $v_1, v_2, \dots, v_{i-1}$.
We place the rectangle for $v_n$ similarly but we make sure that it does not see that for $v_1$.
% However, $v_n$ is not connected to $v_1$, so we beware to place the rectangle for $v_n$ which we can divide into two cases, $n$ is an even integer and an odd integer.
Thus, $D^1_n$ is a TRVG.
\begin{figure}[h]
	\centering
	\begin{tikzpicture}[scale=0.75]
		%%even
		\filldraw[red!60!white, draw=black, very thick] (0,0) rectangle (0.5,0.5);
		\draw (0.25,0.25) node[rectangle] {$v_1$};
		\filldraw[red!60!white, draw=black, very thick] (0.5,0.5) rectangle (1,1);
		\draw (0.75,0.75) node[rectangle] {$v_2$};
		\filldraw[red!60!white, draw=black, very thick] (1,0) rectangle (1.5,0.5);
		\draw (1.25,0.25) node[rectangle] {$v_3$};
		\filldraw[red!60!white, draw=black, very thick] (0,1) rectangle (1,1.5);
		\draw (0.5,1.25) node[rectangle] {$v_4$};
		\filldraw[red!60!white, draw=black, very thick] (1.5,0) rectangle (2,1);
		\draw (1.75,0.5) node[rectangle] {$v_5$};
		\filldraw[red!60!white, draw=black, very thick] (0,1.5) rectangle (1.5,2);
		\draw (0.75,1.75) node[rectangle] {$v_6$};
		\draw[dotted, very thick] (0.5,2.2) -- (0.5,2.8);
		\draw[dotted, very thick] (2.2,0.5) -- (2.8,0.5);
		\draw[dotted, very thick] (1.8,1.8) -- (2.8,2.8);
		\filldraw[red!60!white, draw=black, very thick] (0,3) rectangle (3,3.5);
		\draw (1.5,3.25) node[rectangle] {$v_{n-2}$};
		\filldraw[red!60!white, draw=black, very thick] (3,0) rectangle (3.5,2.5);
		\draw (3.25,1.25) node[rectangle, rotate=270] {$v_{n-3}$};
		\filldraw[red!60!white, draw=black, very thick] (0.5,3.5) rectangle (3.5,4);
		\draw (2,3.75) node[rectangle] {$v_n$};
		\filldraw[red!60!white, draw=black, very thick] (3.5,0) rectangle (4,3);
		\draw (3.75,1.5) node[rectangle, rotate=270] {$v_{n-1}$};
	\end{tikzpicture}
	\caption{A representation of $D^1_n$ when $n$ is even}
	\label{fig:k1reg}
\end{figure}
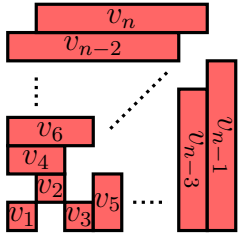
\end{proof}

Next we will prove that the complements of some powers of cycles are non-TRVGs.

\begin{proof}[Proof of Theorem~\ref{thm:poc}$(iii)$]
Consider $D^a_n$ with $n\ge2a+8$ and $a\ge3$.
Let $v_1, v_2, \dots, v_n$ be the vertices of $D^a_n$ as in the definition.
We claim that $K_{4,4}$ is an induced subgraph of $D^a_n$.
Consider the subgraph induced by $\{v_1, v_2, v_3, v_4, v_{a+5}, v_{a+6}, v_{a+7}, v_{a+8}\}$.
Since $a \ge 3$, the sets $\{v_1, v_2, v_3, v_4\}$ and $\{v_{a+5}, v_{a+6}, v_{a+7}, v_{a+8}\}$ are independent.
It remains to show that the indices of $v_4$ and $v_{a+5}$ are more than distance $a$ away and the indices of $v_1$ and $v_{a+8}$ are more than distance $a$ away.
We see that $(a+5)-4>a$ and $(n+1)-(a+8) \ge (2a+9)-(a+8)>a$.
By Theorem~\ref{thm:compbip}, $D^a_n$ is a non-TRVG.
\end{proof}

On the other hand, the complements of some powers of cycles can be TRVGs if $n$ is small compared to $a$.

\begin{proof}[Proof of Theorem~\ref{thm:poc}$(iv)$]
If $n\le 2a+3$ then $D^a_n$ is either an empty graph, a matching or a cycle. These are obviously TRVGs.
Suppose $n=2a+4$.
Let $v_1,v_2,\dots,v_n$ be the vertices of $D^a_n$ as in the definition.
% Each vertex $v_i$ is adjacent to exactly three vertices $v_{i+a+1},v_{i+a+2}$ and $v_{i+a+3}$.
% Since $v_1,v_2,\dots,v_{a+1}$ are independent, we place rectangles for them from bottom left to top right such that they do not see each other (see Figures~\ref{fig:d^a_2a+4_1} and~\ref{fig:d^a_2a+4_2}).
% Let $B$ be a bounding box of $v_1,v_2,\dots,v_{a+1}$.
Partition the vertices into four groups, including $V_1=\{v_1,v_2,\dots,v_{a+1}\}$, $V_2=\{v_i:a+3\le i\le n-1\text{ and }i\text{ is odd}\}$, $V_3=\{v_i:a+3\le i\le n-1\text{ and }i\text{ is even}\}$ and $V_4=\{v_{a+2},v_n\}$.
Since $V_i$ is independent for $i\in\{1,2,3\}$, we place the rectangles for its elements diagonally so that they do not see each other.
We choose the position of each group appropriately (see Figures~\ref{fig:d^a_2a+4_1} and~\ref{fig:d^a_2a+4_2}).
Finally, we place the remaining rectangles for elements of $V_4$ as shown.
\end{proof}
% We attach the rectangle for $v_{a+2}$ to the top side of $B$ such that it sees only that for $v_1$ vertically.
% Then attach the rectangle for $v_{a+3}$ to the right side of $B$ such that it sees only those for $v_1$ and $v_2$ vertically if $a$ is odd and horizontally if $a$ is even.

% Since the rectangle for $v_{a+4}$ sees exactly those for $v_1,v_2,v_3$ and it is outside $B$, so it can sees these three rectangles from the same direction and cover one direction of vision of rectangle for $v_2$.
% Since $v_{a+4}$ do not adjacent to $v_{a+3}$, $v_{a+4}$ need to sees $v_{a+2}$ from different direction.
% Continuing this fashion until we have $D^a_{2a+4}-\{v_{n-1},v_n\}$.

% Since $v_{n-1}$ adjacent to $v_{a},v_{a+1},v_{a+2}$ and a bottom side of rectangle for $v_{a+2}$ is attach to top side of $B$ and we have already dealt with evenness or oddness of $a$, we can place rectangle for $v_{n-1}$ sees those three horizontally cover a horizontal vision of rectangle for $v_{a+1}$.

% Finally, there is an area to place rectangle for $v_n$ such that it sees rectangles for $v_{a+1},v_{a+2},v_{a+3}$.
\begin{figure}[h]
	\centering
	\begin{tikzpicture}[scale=0.56]
		\draw[step=0.5cm,gray,dashed] (0.1,0.1) grid (12.4,12.9);
		\filldraw[red!60!white, draw=black, very thick] (0.5,0.1) rectangle (1.5,1);
		\draw (1,0.5)node[scale=0.8]{$v_{1}$};
		\filldraw[red!60!white, draw=black, very thick] (1.5,1) rectangle (2.5,2);
		\draw (2,1.5)node[scale=0.8]{$v_{2}$};
		\filldraw[red!60!white, draw=black, very thick] (2.5,2) rectangle (3.5,3);
		\draw (3,2.5)node[scale=0.8]{$v_{3}$};
		\draw[dotted,very thick](3.75,3.25)--(6.25,5.75);
		\filldraw[red!60!white, draw=black, very thick] (6.5,6) rectangle (7.5,7);
		\draw (7,6.5)node[scale=0.8]{$v_{a}$};
		\filldraw[red!60!white, draw=black, very thick] (7.5,7) rectangle (8.5,8);
		\draw (8,7.5)node[scale=0.8]{$v_{a+1}$};
		
		\filldraw[yellow!60!white, draw=black, very thick] (0.1,8) rectangle (1,9);
		\draw (0.55,8.5)node[scale=0.7]{$v_{a+2}$};
		\filldraw[blue!60!white, draw=black, very thick] (1,9) rectangle (2,10);
		\draw (1.5,9.5)node[scale=0.8]{$v_{a+3}$};
		\filldraw[blue!60!white, draw=black, very thick] (2,10) rectangle (4,11);
		\draw (3,10.5)node[scale=0.8]{$v_{a+5}$};
		\draw[dotted,very thick](4.25,11.125)--(5.75,11.875);
		\filldraw[blue!60!white, draw=black, very thick] (6,12) rectangle (8,12.9);
		\draw (7,12.5)node[scale=0.8]{$v_{n-2}$};
		
		\filldraw[green!60!white, draw=black, very thick] (8.5,0.5) rectangle (9.5,2.5);
		\draw (9,1.5)node[scale=0.8]{$v_{a+4}$};
		\filldraw[green!60!white, draw=black, very thick] (9.5,2.5) rectangle (10.5,4.5);
		\draw (10,3.5)node[scale=0.8]{$v_{a+6}$};
		\draw[dotted,very thick](10.625,4.75)--(11.375,6.25);
		\filldraw[green!60!white, draw=black, very thick] (11.5,6.5) rectangle (12.4,8.5);
		\draw (11.95,7.5)node[scale=0.7]{$v_{n-1}$};
		
		\filldraw[yellow!60!white, draw=black, very thick] (8,8.5) rectangle (8.5,9.5);
		\draw (8.25,9)node[scale=0.7]{$v_{n}$};
		\draw[very thick](0.1,0.1)rectangle(12.4,12.9);
	\end{tikzpicture}
	\caption{A representation of $D^a_{2a+4}$ when $a$ is odd}
	\label{fig:d^a_2a+4_1}
\end{figure}
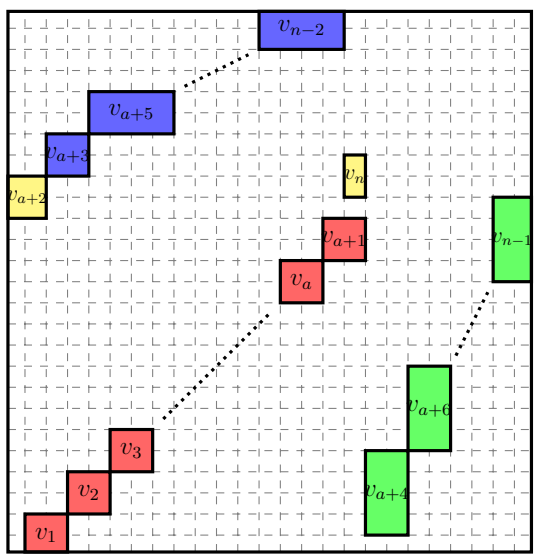
\begin{figure}[h]
	\centering
	\begin{tikzpicture}[scale=0.56]
		\draw[step=0.5cm,gray,dashed] (0.1,0.1) grid (11.4,10.9);
		\filldraw[red!60!white, draw=black, very thick] (0.5,0.1) rectangle (1.5,1);
		\draw (1,0.5)node[scale=0.8]{$v_{1}$};
		\filldraw[red!60!white, draw=black, very thick] (1.5,1) rectangle (2.5,2);
		\draw (2,1.5)node[scale=0.8]{$v_{2}$};
		\filldraw[red!60!white, draw=black, very thick] (2.5,2) rectangle (3.5,3);
		\draw (3,2.5)node[scale=0.8]{$v_{3}$};
		\draw[dotted,very thick](3.75,3.25)--(5.25,4.75);
		\filldraw[red!60!white, draw=black, very thick] (5.5,5) rectangle (6.5,6);
		\draw (6,5.5)node[scale=0.8]{$v_{a}$};
		\filldraw[red!60!white, draw=black, very thick] (6.5,6) rectangle (7.5,7);
		\draw (7,6.5)node[scale=0.8]{$v_{a+1}$};
		
		\filldraw[yellow!60!white, draw=black, very thick] (0.1,7) rectangle (1,8);
		\draw (0.55,7.5)node[scale=0.7]{$v_{a+2}$};
		\filldraw[blue!60!white, draw=black, very thick] (1,8) rectangle (3,9);
		\draw (2,8.5)node[scale=0.8]{$v_{a+4}$};
		\draw[dotted,very thick](3.25,9.125)--(4.75,9.875);
		\filldraw[blue!60!white, draw=black, very thick] (5,10) rectangle (7,10.9);
		\draw (6,10.45)node[scale=0.8]{$v_{n-2}$};
		
		\filldraw[green!60!white, draw=black, very thick] (7.5,0.5) rectangle (8.5,1.5);
		\draw (8,1)node[scale=0.8]{$v_{a+3}$};
		\filldraw[green!60!white, draw=black, very thick] (8.5,1.5) rectangle (9.5,3.5);
		\draw (9,2.5)node[scale=0.8]{$v_{a+5}$};
		\draw[dotted,very thick](9.625,3.75)--(10.375,5.25);
		\filldraw[green!60!white, draw=black, very thick] (10.5,5.5) rectangle (11.4,7.5);
		\draw (10.95,6.5)node[scale=0.7]{$v_{n-1}$};
		
		\filldraw[yellow!60!white, draw=black, very thick] (7,7.5) rectangle (8,8);
		\draw (7.5,7.75)node[scale=0.8]{$v_{n}$};
		\draw[very thick](0.1,0.1)rectangle(11.4,10.9);
	\end{tikzpicture}
	\caption{A representation of $D^a_{2a+4}$ when $a$ is even}
	\label{fig:d^a_2a+4_2}
\end{figure}
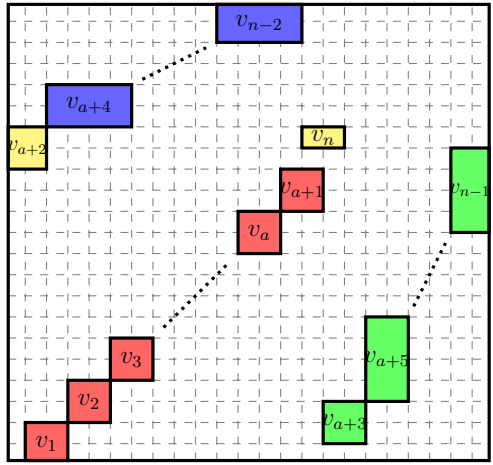

%conclude%
\section{Concluding Remarks}\label{sec:conclude}

In Section~\ref{secgrid}, we show that some natural planar graphs are TRVGs.
% In Section~\ref{secgrid}, it would probably be difficult to classify every planar graph.
However, we cannot find a planar non-TRVG.

\begin{quest}
    % Is every planar graph a TRVG?
    Is there a planar non-TRVG?
\end{quest}

In Section~\ref{secbipartite}, we show that $K_{3,5}$ is a non-TRVG. We cannot find a non-TRVG with fewer than $15$ edges.

\begin{conj}
    % There is no non-TRVG with fewer than $15$ edges.
    $K_{3,5}$ is a non-TRVG with the least number of edges.
\end{conj}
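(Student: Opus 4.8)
The plan is to reformulate the conjecture as the statement that \emph{every graph with at most $14$ edges is a TRVG}, since $K_{3,5}$ has $15$ edges and the conjecture asserts that no non-TRVG has fewer. I would prove this by working with a vertex-minimal counterexample, i.e.\ a non-TRVG all of whose proper induced subgraphs are TRVGs, and extracting enough structure to reduce to a finite bounded family for which representations can be produced. Observe first that a $14$-edge graph cannot even contain $K_{3,5}$ or $K_{4,4}$ as a subgraph, so the only obstructions established in this paper are automatically absent; the entire difficulty is ruling out hypothetical \emph{new} obstructions.

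First I would record the closure operations that drive the reduction. A disjoint union of TRVGs is a TRVG: place the two representations in bounding boxes occupying disjoint horizontal \emph{and} disjoint vertical strips, so that no horizontal or vertical line meets rectangles from both. Adding or deleting an isolated or a universal vertex preserves TRVG-ness by the threshold construction in the proof of Theorem~\ref{thm:istrvg}$(i)$. Finally, a leaf can always be attached: given a representation of $G$ and a vertex $u$, extend the rectangle for $u$ rightward past the bounding box into empty space, so that the protruding part of its horizontal extent meets no other rectangle; then a small new rectangle placed in a horizontal strip above everything and in that exposed column sees $u$ vertically and nothing else. Consequently a vertex-minimal non-TRVG is connected, has no universal vertex, and has minimum degree at least $2$.

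Minimum degree at least $2$ forces $e \ge n$, so a minimal counterexample has $n \le e \le 14$ vertices, leaving only finitely many candidates. For these I would attempt a block decomposition: a connected graph with $\delta \ge 2$ is glued from $2$-connected blocks along cut vertices, and Lemma~\ref{lemma:disjointv} combines TRVGs meeting in a single vertex provided that vertex has a free direction in each piece. Cycles and powers of cycles (Theorem~\ref{thm:istrvg}$(iii)$ and Theorem~\ref{thm:poc}$(i)$) already supply representations of many blocks, so the task narrows to the remaining $2$-connected blocks on at most $14$ vertices, to be handled by explicit construction while checking that a free-direction representation is available at each cut vertex.

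The main obstacle is precisely this last step. Since no structural characterization of TRVGs is known, ruling out a small non-TRVG cannot be done by forbidden-subgraph checking; one must exhibit an actual rectangle representation for \emph{every} surviving candidate. Two difficulties compound here. Lemma~\ref{lemma:disjointv} requires a representation in which the shared cut vertex sees nothing in one direction, and it is not clear that every $2$-connected TRVG block admits such a representation at a prescribed vertex, so the clean inductive gluing may break down; and the tightest cases are the dense $14$-edge graphs sitting one edge short of $K_{3,5}$ or $K_{4,4}$, where the rectangles are so constrained that a representation, if it exists, would have to be found by a careful (probably computer-assisted) case analysis. Closing the gluing gap, most likely by strengthening Lemma~\ref{lemma:disjointv} to guarantee free-direction representations for the relevant blocks, is where I expect the real work to lie.
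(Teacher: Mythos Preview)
This statement is a \emph{conjecture} in the paper, posed in the concluding remarks as an open problem; the paper offers no proof. Your proposal should therefore be read not as a competing argument but as an attack on an open question, and you are right to frame it as a plan whose endgame (exhibiting representations for every surviving candidate on at most $14$ edges) is the unresolved core.

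There is, however, a gap you treat as settled. Your leaf-attachment construction---``extend the rectangle for $u$ rightward past the bounding box''---can fail: if some rectangle $R_w$ lies to the right of $R_u$ with overlapping $y$-range (so that $w$ currently sees $u$ horizontally), then widening $R_u$ to the right does not pass harmlessly into empty space but overlaps $R_w$. Only the portion \emph{beyond} the bounding box is guaranteed empty; the stretch from the old right edge of $R_u$ to the boundary of the bounding box need not be. The assertion that a pendant vertex can always be appended at an arbitrary vertex of a TRVG is essentially the claim that every vertex of every TRVG admits a representation with a free direction in the sense of Lemma~\ref{lemma:disjointv}, and the paper neither proves nor asserts this. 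Without it, your reduction to minimum degree at least $2$, and hence to $n\le 14$, is not justified; this is a second structural gap on top of the finite case analysis you already flag.

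Even granting both reductions, you correctly identify that the substance of the conjecture is the explicit construction of representations for all remaining $2$-connected blocks, together with the free-direction control at cut vertices needed to invoke Lemma~\ref{lemma:disjointv}. Your outline organises the problem sensibly but does not move it past where the paper leaves it.
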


In Section~\ref{secpoc}, we show that every $D^a_n$ with $n \ge 2a+8$ vertices is a non-TRVG while every $D^a_n$ with $n \le 2a+4$ vertices is a TRVG for all $a\ge3$. However, we cannot find a representation of the remaining cases.

\begin{conj}
    $D^a_n$ is a non-TRVG when $2a+5\le n\le 2a+7$ and $a\ge 3$.
\end{conj}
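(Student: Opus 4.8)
The plan is to prove these graphs are non-TRVGs by the same mechanism as Theorem~\ref{thm:poc}$(iii)$: locate inside $D^a_n$ a forbidden induced subgraph. By Theorem~\ref{thm:bipartTRVG} any bipartite induced subgraph on $m$ vertices with more than $2m-2$ edges is a non-TRVG, and in particular an induced $K_{4,4}$ or $K_{3,5}$ suffices by Theorem~\ref{thm:compbip}. I would first record that $D^a_n$ is the circulant graph in which $v_i,v_j$ are adjacent exactly when their cyclic distance lies in $\{a+1,\dots,\lfloor n/2\rfloor\}$, and that an independent set of $D^a_n$ is a clique of $C^a_n$, hence an arc of at most $a+1$ consecutive vertices. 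So both parts of any induced complete bipartite subgraph must be short arcs, and the task is to find two such arcs all of whose cross-pairs have cyclic distance exceeding $a$.

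Carrying this out for $A=\{v_1,v_2,v_3,v_4\}$, the set of vertices at cyclic distance $>a$ from all four is the intersection of four ``far'' arcs and has size exactly $n-2a-4$. Thus for $n\ge 2a+8$ this common neighbourhood has at least four vertices, yielding the induced $K_{4,4}$ of Theorem~\ref{thm:poc}$(iii)$, whereas for $n\in\{2a+5,2a+6,2a+7\}$ it has only $1$, $2$, or $3$ vertices respectively, so the naive construction breaks down precisely in the conjectured range. For $n=2a+6$ and $n=2a+7$, however, $D^a_n$ is $5$- and $6$-regular, so a bipartite induced subgraph $H$ on $m$ vertices can have up to $\lfloor\tfrac{5m}{2}\rfloor$ or $\lfloor\tfrac{6m}{2}\rfloor$ edges, comfortably above $2m-2$; here I would try to exhibit a sufficiently dense (not necessarily complete) bipartite induced subgraph and invoke Theorem~\ref{thm:bipartTRVG} directly.

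The genuine obstacle is the smallest case $n=2a+5$. Since $-(a+1)\equiv a+4$ and $-(a+2)\equiv a+3\pmod{2a+5}$, every vertex is joined to the four consecutive vertices $v_{i+a+1},\dots,v_{i+a+4}$, so $D^a_{2a+5}$ is a connected $4$-regular circulant; being vertex-transitive it is $4$-edge-connected, and a short computation with the offsets $\{\pm(a+1),\pm(a+2)\}$ shows no three of them sum to $0\pmod{2a+5}$, so it is triangle-free. Consequently it contains no induced $K_{3,5}$ (which needs a vertex of degree $5$) and no induced $K_{4,4}$ (which, in a $4$-regular host, would have to be a whole connected component, impossible as $2a+5>8$). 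Moreover, for any proper bipartite induced subgraph $H$ the number of edges leaving $H$ is $4m-2e(H)$, and $4$-edge-connectivity forces this to be at least $4$, giving $e(H)\le 2m-2$. Hence neither Theorem~\ref{thm:compbip} nor Theorem~\ref{thm:bipartTRVG} can certify that $D^a_{2a+5}$ is a non-TRVG, and a new obstruction is unavoidable.

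To supply one I would aim to prove a triangle-free analogue of Lemma~\ref{lemma:gseer}: that every triangle-free TRVG on $n$ vertices has at most $2n-2$ edges. As $D^a_{2a+5}$ is triangle-free with exactly $2(2a+5)$ edges, such a bound would settle this case at once (and, since bipartite TRVGs already attain $2n-2$, the bound would be tight). The hard part, and the reason the statement remains conjectural, is exactly this step: in a merely triangle-free representation two rectangles of the same colour class may legitimately share a horizontally-visible rectangle, so the clean ``at most one common neighbour'' estimate underlying Lemma~\ref{lemma:gseer} no longer holds and must be replaced by a more delicate accounting of shared visibilities. Controlling these shared visibilities for the highly symmetric circulants $D^a_{2a+5}$, together with a companion argument for the non-triangle-free graphs $D^a_{2a+6}$ and $D^a_{2a+7}$, is the crux of the problem.
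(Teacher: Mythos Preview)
The statement you are addressing is listed in the paper as a \emph{conjecture}; the authors give no proof, so there is nothing to compare against.  Your write-up is, correspondingly, not a proof but an analysis of why the problem resists the paper's methods, and you say as much.

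Your treatment of $n=2a+5$ is correct and sharp: $D^a_{2a+5}$ is a connected, triangle-free, $4$-regular, vertex-transitive (hence $4$-edge-connected) graph on an odd number of vertices, and your edge-boundary count $4m-2e(H)\ge 4$ shows every proper induced subgraph $H$ satisfies $e(H)\le 2|H|-2$.  Since the whole graph is not bipartite, Theorem~\ref{thm:bipartTRVG} is indeed powerless here.  You undersell the other two cases, however.  For $n=2a+7$ with any $a\ge 3$, and for $n=2a+6$ with any $a\ge 4$, Theorem~\ref{thm:bipartTRVG} actually \emph{does} apply once you look beyond complete bipartite subgraphs: e.g.\ for $n=2a+7$, $a\ge 4$, the arcs $\{v_1,\dots,v_4\}$ and $\{v_{a+4},\dots,v_{a+8}\}$ induce a bipartite graph on $9$ vertices with $18>2\cdot 9-2$ edges (for $a=3$ use two arcs of length $4$ to get $15$ edges on $8$ vertices), and an analogous $5$--$5$ pair works for $n=2a+6$, $a\ge 4$.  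So most of the conjecture is already within reach of the paper's tools; what genuinely remains is $D^a_{2a+5}$ for all $a\ge 3$ together with the single graph $D^3_{12}$.  Your proposed triangle-free strengthening of Theorem~\ref{thm:bipartTRVG} would dispose of the former, but $D^3_{12}$ contains triangles (e.g.\ $\{v_1,v_5,v_9\}$) and its densest bipartite induced subgraph has exactly $2m-2$ edges, so even that strengthening would leave a residual case.
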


What if $a=2$?
Theorem~\ref{thm:poc} shows that $D^2_n$ is a TRVG when $n\le 8$.
Moreover, we found that $D^2_9$ is a TRVG (see Figure~\ref{fig:9k2reg}).
However, we cannot find a representation of $D^2_{10}$ with respect to the plane, but we found a representation of $D^2_{10}$ with respect to the torus (see Figure~\ref{fig:10k2regtorus}).
% we left the case $a=2$ and $n\ge9$ which we already find a representation of $D^2_9$ (see Figure~\ref{fig:9k2reg}). However, we cannot find a representation of $D^2_{10}$ with respect to the plane, but we found a representation of $D^2_{10}$ with respect to the torus (see Figure~\ref{fig:10k2regtorus}).
% Clearly, $D^2_n$ is an empty graph when $n\le5$, $D^2_6$ is a matching and $D^2_7$ is a cycle.
% Moreover, we found that $D^2_9$ are TRVGs (see Figures~\ref{fig:8k2reg} and~\ref{fig:9k2reg}).
% However, we cannot find a representation of $D^2_{10}$ with respect to the plane, but we found a representation of $D^2_{10}$ with respect to the torus (see Figure~\ref{fig:10k2regtorus}).

\begin{conj}
    $D^2_n$ is a non-TRVG when $n\ge10$.
\end{conj}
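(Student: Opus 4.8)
The plan is to first establish a clean combinatorial reformulation of TRVGs and then attack $D^2_n$ through it. For non-overlapping axis-parallel rectangles $R_v=X_v\times Y_v$, two rectangles can never see each other both horizontally and vertically, since that would force $X_u\cap X_v$ and $Y_u\cap Y_v$ to both have positive length and hence $R_u,R_v$ to overlap. Thus every edge of a TRVG is realised in exactly one direction, and the horizontal edges (overlap of the $y$-intervals $Y_v$) form an interval graph while the vertical edges (overlap of the $x$-intervals $X_v$) form another. Conversely, given any two interval graphs on a common vertex set with disjoint edge sets, representing them by intervals $Y_v$ and $X_v$ respectively and setting $R_v=X_v\times Y_v$ yields a valid TRVG. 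Hence $G$ is a TRVG if and only if $E(G)$ can be partitioned into two edge sets, each inducing an interval graph on $V(G)$. I would take this characterisation as the engine of the whole argument; in particular, every independent set of $G$ --- for $D^2_n$, every three consecutive vertices $v_{i-1},v_i,v_{i+1}$ --- must remain independent in both parts.

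Next I would translate the problem into the language of interval orders. Writing $P_1,P_2$ for the interval orders whose incomparability graphs are the two interval graphs above, a pair is incomparable in $P_1$ (resp. $P_2$) exactly when it is a horizontal (resp. vertical) edge, so every pair is comparable in at least one $P_i$, and a pair is comparable in both orders precisely when the two vertices are non-adjacent in $D^2_n$, i.e. at cyclic distance at most $2$. Therefore a TRVG representation of $D^2_n$ is equivalent to a pair of interval orders on $\{v_0,\dots,v_{n-1}\}$ whose comparabilities together cover all pairs and whose common comparabilities form exactly the cyclic band $C^2_n$. The goal becomes to show that no such pair of interval orders exists for $n\ge 10$.

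The mechanism I expect to drive the contradiction is the tension between the \emph{linear} nature of interval orders and the \emph{cyclic} nature of $C^2_n$. Traversing $v_0,v_1,\dots,v_{n-1},v_0$, each consecutive pair is at distance $1$, hence comparable in both orders; recording for each order whether we step up or down yields two cyclic sign sequences, and since a partial order is acyclic neither sequence can be monotone, forcing local extrema. I would try to show, using the distance-$2$ comparabilities and the overlap (adjacency) constraints together with the $2+2$-free property of interval orders, that the two sign patterns cannot be reconciled around a cycle of length $n\ge 10$ --- essentially a winding-number obstruction. Strong circumstantial support comes from the torus: replacing ``horizontal/vertical line'' by the wrap-around circles turns interval graphs into circular-arc graphs, so the torus version asks for two edge-disjoint circular-arc graphs, and the fact (exhibited earlier) that $D^2_{10}$ \emph{is} a torus-TRVG while conjecturally not a plane-TRVG is exactly what one expects if the sole obstruction is the impossibility of cutting this circular band into linear intervals.

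The hard part, and the reason the statement is only a conjecture, is twofold. First, there is no induced-subgraph monotonicity among the graphs $D^2_n$: deleting a vertex of $D^2_{n+1}$ alters the wrap-around distances and does not produce $D^2_n$, so one cannot bootstrap from a single base case $D^2_{10}$, and a genuinely uniform argument in $n$ is required. Second, the standard certificates for non-TRVGs are unavailable here: $D^2_n$ has independence number $3$, so it contains neither $K_{4,4}$ nor $K_{3,5}$ as an induced subgraph, and the edge-count bound of Theorem~\ref{thm:bipartTRVG} applies only to bipartite graphs, while general TRVGs (such as $K_n$) admit no edge bound at all. Thus the winding-number obstruction must be made quantitative enough to survive for every $n\ge 10$; converting the heuristic sign-sequence argument into a rigorous impossibility proof --- or, failing that, isolating a finite forbidden pattern common to all $D^2_n$ with $n\ge 10$ and verifying it, possibly with computer assistance --- is where the real difficulty lies.
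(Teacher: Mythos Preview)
The statement is a \emph{conjecture} in the paper, not a theorem: the authors give no proof and explicitly list it among the open problems in Section~\ref{sec:conclude}, remarking only that they could not find a representation of $D^2_{10}$ in the plane (while exhibiting one on the torus). So there is no proof in the paper to compare your proposal against.

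That said, your reformulation is correct and worth recording. The equivalence ``$G$ is a TRVG if and only if $E(G)$ admits a partition into two interval graphs on $V(G)$'' holds: in one direction, horizontal visibility is exactly overlap of the open $y$-intervals and vertical visibility is overlap of the open $x$-intervals, and these cannot both occur for a single pair since the rectangles would then overlap; in the other direction, taking interval representations $Y_v$ and $X_v$ of the two parts and setting $R_v=X_v\times Y_v$ gives pairwise non-overlapping rectangles precisely because the two edge sets are disjoint. The passage to interval orders $P_1,P_2$, and your description of which pairs are comparable in one versus both orders, is likewise correct. Your diagnosis of why the standard tools fail is also on target: the independence number of $D^2_n$ is $3$, so neither $K_{3,5}$ nor $K_{4,4}$ occurs as an induced subgraph, the bipartite edge bound is inapplicable, and there is no induced-subgraph reduction from $D^2_{n+1}$ to $D^2_n$.

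What remains is exactly the gap you name yourself: the ``winding-number'' argument along the cycle is a heuristic, not a proof. Recording up/down steps in each $P_i$ along $v_0,v_1,\dots,v_{n-1},v_0$ and invoking acyclicity rules out monotone sequences, but you have not shown how the interval-order ($2+2$-free) condition together with the distance-$2$ constraints forces a contradiction for all $n\ge 10$. Until that step is made rigorous, the proposal is a reasonable plan of attack on an open problem --- consistent with the paper's own view --- but not a proof.
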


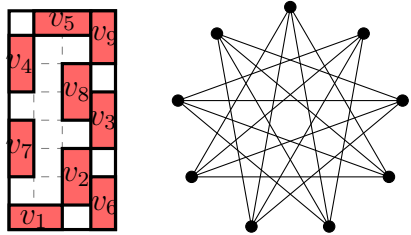
\begin{figure}[h]
	\centering
	\begin{tikzpicture} [scale=0.5]
		%%stcr
		\draw[very thick] (-4.4,5.9) rectangle (-1.6,0.1);
		\draw[step=0.75cm,gray,dashed] (-4.4,0.1) grid (-1.6,5.9);
		\filldraw[red!60!white, draw=black, very thick] (-4.4,0.1) rectangle (-3,0.75);
		\draw (-3.75,0.375) node[rectangle] {$v_1$};
		\filldraw[red!60!white, draw=black, very thick] (-2.25,2.25) rectangle (-3,0.75);
		\draw (-2.625,1.5) node[rectangle] {$v_2$};
		\filldraw[red!60!white, draw=black, very thick] (-2.25,2.25) rectangle (-1.6,3.75);
		\draw (-1.925,3) node[rectangle] {$v_3$};
		\filldraw[red!60!white, draw=black, very thick] (-4.4,3.75) rectangle (-3.75,5.25);
		\draw (-4.125,4.5) node[rectangle] {$v_4$};
		\filldraw[red!60!white, draw=black, very thick] (-2.25,5.9) rectangle (-3.75,5.25);
		\draw (-3,5.625) node[rectangle] {$v_5$};
		\filldraw[red!60!white, draw=black, very thick] (-2.25,0.1) rectangle (-1.6,1.5);
		\draw (-1.875,0.75) node[rectangle] {$v_6$};
		\filldraw[red!60!white, draw=black, very thick] (-4.4,1.5) rectangle (-3.75,3);
		\draw (-4.125,2.25) node[rectangle] {$v_7$};
		\filldraw[red!60!white, draw=black, very thick] (-3,3) rectangle (-2.25,4.5);
		\draw (-2.625,3.75) node[rectangle] {$v_8$};
		\filldraw[red!60!white, draw=black, very thick] (-2.25,4.5) rectangle (-1.6,5.9);
		\draw (-1.875,5.25) node[rectangle] {$v_9$};
		%%graph
		\filldraw (1.977,0.18) circle(0.15cm);
		\filldraw (4.023,0.18) circle(0.15cm);
		\filldraw (5.596,1.497) circle(0.15cm);
		\filldraw (5.955,3.52) circle(0.15cm);
		\filldraw (4.93,5.298) circle(0.15cm);
		\filldraw (3,6) circle(0.15cm);
		\filldraw (1.07,5.298) circle(0.15cm);
		\filldraw (0.045,3.52) circle(0.15cm);
		\filldraw (0.404,1.497) circle(0.15cm);
		
		\draw (3,6) -- (0.404,1.497);
		\draw (3,6) -- (1.977,0.18);
		\draw (3,6) -- (4.023,0.18);
		\draw (3,6) -- (5.596,1.497);
		\draw (1.07,5.298) -- (1.977,0.18);
		\draw (1.07,5.298) -- (4.023,0.18);
		\draw (1.07,5.298) -- (5.596,1.497);
		\draw (1.07,5.298) -- (5.955,3.52);
		\draw (0.045,3.52) -- (4.023,0.18);
		\draw (0.045,3.52) -- (5.596,1.497);
		\draw (0.045,3.52) -- (5.955,3.52);
		\draw (0.045,3.52) -- (4.93,5.298);
		\draw (0.404,1.497) -- (5.596,1.497);
		\draw (0.404,1.497) -- (5.955,3.52);
		\draw (0.404,1.497) -- (4.93,5.298);
		\draw (1.977,0.18) -- (5.955,3.52);
		\draw (1.977,0.18) -- (4.93,5.298);
		\draw (4.023,0.18) -- (4.93,5.298);
	\end{tikzpicture}
	\caption{A representation of $D^2_9$}
	\label{fig:9k2reg}
\end{figure}

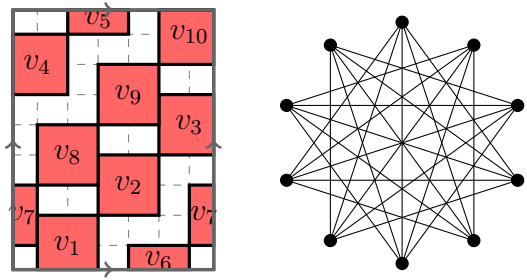
\begin{figure}[h!]
	\centering
	\begin{tikzpicture} [scale=0.8]
		%%stcr
		\draw[very thick] (-4.4,4.4) rectangle (-1.1,0.1);
		\draw[step=0.5cm,gray,dashed] (-4.4,4.4) grid (-1.1,0.1);
		\filldraw[red!60!white, draw=black, very thick] (-2.5,0.1) rectangle (-1.5,0.5);
		\draw (-2,0.25) node[rectangle] {$v_6$};
		\filldraw[red!60!white, draw=black, very thick] (-1.5,0.5) rectangle (-1.1,1.5);
		\draw (-1.25,1) node[rectangle] {$v_7$};
            \filldraw[red!60!white, draw=black, very thick] (-4.4,0.5) rectangle (-4,1.5);
		\draw (-4.25,1) node[rectangle] {$v_7$};
		\filldraw[red!60!white, draw=black, very thick] (-4,0.1) rectangle (-3,1);
		\draw (-3.5,0.5) node[rectangle] {$v_1$};
		\filldraw[red!60!white, draw=black, very thick] (-3,1) rectangle (-2,2);
		\draw (-2.5,1.5) node[rectangle] {$v_2$};
		\filldraw[red!60!white, draw=black, very thick] (-2,2) rectangle (-1.1,3);
		\draw (-1.5,2.5) node[rectangle] {$v_3$};
		\filldraw[red!60!white, draw=black, very thick] (-4.4,3) rectangle (-3.5,4);
		\draw (-4,3.5) node[rectangle] {$v_4$};
		\filldraw[red!60!white, draw=black, very thick] (-3.5,4) rectangle (-2.5,4.4);
		\draw (-3,4.25) node[rectangle] {$v_5$};
		\filldraw[red!60!white, draw=black, very thick] (-4,1.5) rectangle (-3,2.5);
		\draw (-3.5,2) node[rectangle] {$v_8$};
		\filldraw[red!60!white, draw=black, very thick] (-3,2.5) rectangle (-2,3.5);
		\draw (-2.5,3) node[rectangle] {$v_9$};
            \filldraw[red!60!white, draw=black, very thick] (-2,3.5) rectangle (-1.1,4.4);
		\draw (-1.5,4) node[rectangle] {$v_{10}$};
        \draw[very thick,->,gray!80!black] (-4.4,0.1)--(-2.75,0.1);
        \draw[very thick,->,gray!80!black] (-1.1,0.1)--(-1.1,2.25);
        \draw[very thick,->,gray!80!black] (-4.4,4.4)--(-2.75,4.4);
        \draw[very thick,->,gray!80!black] (-4.4,0.1)--(-4.4,2.25);
        \draw[very thick,gray!80!black] (-4.4,0.1) rectangle (-1.1,4.4);
        
		%%graph
            \filldraw (2,4.2) circle(0.1cm);
		\filldraw (0.82,3.82) circle(0.1cm);
		\filldraw (0.1,2.82) circle(0.1cm);
		\filldraw (0.1,1.58) circle(0.1cm);
		\filldraw (0.82,0.58) circle(0.1cm);
		\filldraw (2,0.2) circle(0.1cm);
		\filldraw (3.18,0.58) circle(0.1cm);
		\filldraw (3.9,1.58) circle(0.1cm);
		\filldraw (3.9,2.82) circle(0.1cm);
		\filldraw (3.18,3.82) circle(0.1cm);
		
	\draw (2,4.2) -- (0.1,1.58);
	\draw (2,4.2) -- (0.82,0.58);
        \draw (2,4.2) -- (2,0.2);
        \draw (2,4.2) -- (3.18,0.58);
        \draw (2,4.2) -- (3.9,1.58);
        \draw (0.82,3.82) -- (0.82,0.58);
        \draw (0.82,3.82) -- (2,0.2);
        \draw (0.82,3.82) -- (3.18,0.58);
        \draw (0.82,3.82) -- (3.9,1.58);
        \draw (0.82,3.82) -- (3.9,2.82);
        \draw (0.1,2.82) -- (2,0.2);
        \draw (0.1,2.82) -- (3.18,0.58);
        \draw (0.1,2.82) -- (3.9,1.58);
        \draw (0.1,2.82) -- (3.9,2.82);
        \draw (0.1,2.82) -- (3.18,3.82);
        \draw (0.1,1.58) -- (3.18,0.58);
        \draw (0.1,1.58) -- (3.9,1.58);
        \draw (0.1,1.58) -- (3.9,2.82);
        \draw (0.1,1.58) -- (3.18,3.82);
        \draw (0.82,0.58) -- (3.9,1.58);
        \draw (0.82,0.58) -- (3.9,2.82);
        \draw (0.82,0.58) -- (3.18,3.82);
        \draw (2,0.2) -- (3.9,2.82);
        \draw (2,0.2) -- (3.18,3.82);
        \draw (3.18,0.58) -- (3.18,3.82);
	\end{tikzpicture}
	\caption{A representation of $D^2_{10}$ with respect to torus}
	\label{fig:10k2regtorus}
\end{figure}

\bibliographystyle{siam}
\bibliography{TRVGs}

\end{document}